\newtheorem{theorem}{Theorem}[section]
\newtheorem{proposition}[theorem]{Proposition}
\newtheorem{lemma}[theorem]{Lemma}
\newtheorem{follow}[theorem]{Corollary}
\theoremstyle{definition}
\newtheorem{remark}[theorem]{Remark}
\newcommand{\bel}{\begin{equation} \label}
\newcommand{\ee}{\end{equation}}
\newcommand{\pd}{\partial}
\newcommand{\Dom}{{\textit{D}}}
\newcommand{\R}{{\mathbb R}}
\newcommand{\N}{{\mathbb N}}
\newcommand{\bx}{{\bf x}}
\newcommand{\txi}{\tilde{\xi}}
\newcommand{\eps}{{\epsilon}}
\def\beq{\begin{equation}}
\def\eeq{\end{equation}}
\newcommand{\bea}{\begin{eqnarray}}
\newcommand{\eea}{\end{eqnarray}}
\newcommand{\beas}{\begin{eqnarray*}}
\newcommand{\eeas}{\end{eqnarray*}}
\newcommand{\abs}[1]{\left\lvert#1\right\rvert}
\newcommand{\norm}[1]{\left\lVert#1\right\rVert}
 \definecolor{mygreen}{cmyk}{1,0,1,0.1}
\begin{document}

\begin{center}
{\Large \bf Determining the waveguide conductivity in a hyperbolic equation from a single measurement on the lateral boundary}

\medskip

\end{center}

\medskip

\begin{center}
{\sc \footnote{Aix-Marseille Universit\'e, CNRS, I2M UMR 7373, \'Ecole Centrale. E-mail: michel.cristofol@univ-amu.fr.}{Michel Cristofol}, \footnote{University of Science and Technology of China, School of Mathematical Sciences, P. O. Box 4, Hefei, Anhui 230026, People's Republic of China. E-mail: shuminli@ustc.edu.cn.}{Shumin Li}, \footnote{Aix-Marseille Universit\'e, CNRS, CPT UMR 7332, 13288 Marseille, France \& Universit\'e de Toulon, CNRS, CPT UMR 7332, 83957 La Garde, France. E-mail: eric.soccorsi@univ-amu.fr.}{Eric Soccorsi}}
\end{center}

\begin{abstract}
We consider the multidimensional inverse problem of determining the conductivity coefficient of a hyperbolic equation in an infinite cylindrical domain, from a single boundary observation of the solution. We prove H\"older stability with the aid of a Carleman estimate specifically designed for hyperbolic waveguides.
\end{abstract}

\medskip

{\bf  AMS 2010 Mathematics Subject Classification:} 35R30.\\

{\bf  Keywords:} Inverse problem, hyperbolic equation, conductivity, Carleman estimate, infinite cylindrical domain.\\


\section{Statement of the problem and results}
\label{intro} 
\setcounter{equation}{0}
\subsection{Introduction}
The present paper deals with the inverse problem of determining the time-independent isotropic conductivity coefficient $c : \Omega \to \R$ appearing in the hyperbolic partial differential equation $\partial_t^2 -\nabla \cdot c \nabla=0$, where $\Omega := \omega \times \R$ is an infinite cylindrical domain whose cross section $\omega$ is a bounded open subset of $\R^{n-1}$, $n \geq 2$. Namely, $\ell>0$ being arbitrarily fixed, we seek H\"older stability in the identification of $c$ in $\Omega_{\ell}:= \omega \times (-\ell,\ell)$ from the observation of $u$ on the lateral boundary $\Gamma_L := \partial \omega \times (-L,L)$ over the course of time $(0,T)$, for $L > \ell$ and $T>0$ sufficiently large.

Several stability results in the inverse problem of determining one or several unknown coefficients of a hyperbolic equation from a finite number of measurements of the solution are available in the mathematics literature \cite{Be5,BJY08,BY06,BeYa08,IY2,IY03, KY06, Y99}. Their derivation relies on Bukhgeim-Klibanov's method \cite{BK}, which is by means of a Carleman inequality specifically designed for hyperbolic systems. More precisely,
\cite{IY2, Y99} study the determination of the zero-th order term $p : \Omega \to \R$ appearing in $\partial_t -\Delta + p=0$, while \cite{Be5,BeYa08} deal with the identification of the speed $c : \Omega \to \R$ in the hyperbolic equation $\partial_t -c A =0$ where $A=A(x,D_x)$ is a second order differential operator. The case of a principal matrix term in the divergence form, arising from anisotropic
media, was treated by Bellassoued, Jellali and Yamamoto in \cite{BJY08}, using the full data (i.e. the measurements are performed on the whole boundary). Using the FBI transform
Bellassoued and Yamamoto claimed logarithmic stability in \cite{BY06} from arbitrarily small boundary observations. Imanuvilov and Yamamoto derived stability results in \cite{IY03} by means of $H^{-1}$ Carleman inequality, from data observation on subdomains fulfilling specific geometric assumptions. In \cite{KY06} Klibanov and Yamamoto employed a different approach inspired by \cite{KM91}. 

Similarly, numerous authors have used the Dirichlet-to-Neumann operator to claim stability in the determination of unknown coefficients of a hyperbolic equation.
We refer to \cite{BCY09, IS, SU} for a non-exhaustive list of such references.

In all the above mentioned papers, the domain was bounded. Recently, the Bukhgeim-Klibanov method was adapted to the framework of infinite quantum cylindrical domains in \cite{CCG, KPS1, KPS2}. Inverse boundary value problems stated in unbounded waveguides were also studied in \cite{CKS1, CKS2, CS} with the help of the Dirichlet-to-Neumann operator.
In all the six previous articles, the observation is taken on the infinitely extended lateral boundary of the waveguide. 
The approach developed in this paper is rather different in the sense that we aim to retrieve any arbitrary bounded subpart of the non-compactly supported conductivity $c$ from  one data taken on a compact subset of the lateral boundary. This requires that suitable smoothness properties of the solution to \eqref{S1} be preliminarily established in the context of the unbounded domain $\Omega$.

The paper is organized as follows. Section \ref{sec-direct} is devoted to the analysis of the direct problem associated with the hyperbolic system under study. In Section \ref{sec-Carleman} we prove a global Carleman estimate specifically designed for hyperbolic systems in the cylindrical domain $\Omega$. Finally Section \ref{sec-inverse} contains the analysis of the inverse problem and the proof of the main result.

\subsection{Settings}
\label{sec-intro} 
\setcounter{equation}{0}

\subsubsection{Notations}
Throughout this text we write $x=(x',x_n) \in \Omega$ for every $x':=(x_1,\ldots,x_{n-1}) \in \omega$ and $x_n \in \R$. 
Further, we denote by $| y |:=\left( \sum_{i=1}^m y_j^2 \right)^{1 \slash 2}$ the Euclidian norm of $y=(y_1,\ldots,y_m) \in \R^m$, $m \in \N^*$, and we put $\mathbb{S}^{n-1}:=\left\{ x'=(x_1,\ldots,x_{n-1}) \in \R^{n-1},\ |x'|=1 \right\}$.

For the sake of shortness we write $\pd_j$ for $\pd \slash \pd x_j$, $j \in \N_{n+1}^*:=\{ m \in \N^*,\ m \leq n+1 \}$. 
For convenience the time variable $t$ is sometimes denoted by $x_{n+1}$ so that $\pd_t=\pd \slash \pd t=\pd_{n+1}$. 
We set $\nabla:=(\pd_1,\ldots,\pd_n)^T$, $\nabla_{x'}:=(\pd_1,\ldots,\pd_{n-1})^T$ and
$\nabla_{x,t}=(\pd_1,\ldots,\pd_{n},\pd_t)^T$. 

For any open subset $D$ of $\R^m$, $m \in \N^*$, we denote by $H^p(D)$ the $p$-th order Sobolev space on $D$ for every $p \in \N$, where
$H^0(D)$ stands for $L^2(D)$. We write $\langle \cdot , \cdot \rangle_{p,D}$ (resp., $\| \cdot \|_{p,D}$) for the usual scalar product (resp., norm) in $H^p(D)$ and we denote by
$H_0^1(D)$ the closure of $C_0^{\infty}(D)$ in the topology of $H^1(D)$

Finally, for $d>0$ we put $\Omega_d := \omega \times (-d,d)$, $Q_d:=\Omega_d \times(0, T)$, $\Gamma_{d}:= \partial \omega \times (-d, d)$ and $\Sigma_{d} := \partial \omega \times(-d, d) \times (0, T)$.

\subsubsection{Statement of the problem}

We examine the following initial boundary value problem (IBVP in short)
\bel{S1}
\left\{  \begin{array}{ll} \pd_t^2 u -  \nabla \cdot c \nabla u = 0 & \mbox{in}\ Q := \Omega \times (0,T), \\ 
u(\cdot,0) = \theta_0,\ \pd_t u(\cdot,0)  =  \theta_1 & \mbox{in}\ \Omega, \\
u = 0 &  \mbox{on}\ \Sigma:= \Gamma \times (0,T),
\end{array} \right.
\ee
with initial conditions $(\theta_0,\theta_1)$, where $c$ is the unknown conductivity coefficient we aim to retrieve.
This is by means of the Bukhgeim-Klibanov method imposing that the solution $u$ to \eqref{S1} be sufficiently smooth and appropriately bounded. 

Throughout the entire text we shall suppose that $c$ fulfills the ellipticity condition
\bel{c0a}
c \geq c_m\ \mbox{in}\ \Omega,
\ee
for some positive constant $c_m$. Notice that we may assume, and this will be systematically the case in the sequel, without restricting the generality of the foregoing, 
that $c_m \in (0,1)$. 

Let us now say a few words on the solution to \eqref{S1}.
In order to exhibit sufficient conditions on the initial conditions $(\theta_0,\theta_1)$ (together with the cross section $\omega$ and the conductivity $c$) ensuring that the solution to \eqref{S1} is within an appropriate functional class we shall make precise further, we need to introduce the self-adjoint operator $A=A_c$, associated with $c$, generated in $L^2(\Omega)$ by the closed sesquilinear form
$$ q_A[u]:= \| c^{1 \slash 2} u \|_{0,\Omega}^2=\int_{\Omega}  c(x) | u(x) |^2 dx,\ u \in \Dom(q_A):=H_0^1(\Omega). $$
Evidently, $A$ acts on its domain as $-\nabla \cdot c \nabla$. Since $A$ is positive in $L^2(\Omega)$, by \eqref{c0a}, the operator $A^{1 \slash 2}$ is well defined from the spectral theorem, and $\Dom(A^{1 \slash 2})=\Dom(q_A)=H_0^1(\Omega)$. For the sake of definiteness, we set $A^0:=I$ and $\Dom(A^0):=L^2(\Omega)$, where $I$ denotes the identity operator in $L^2(\Omega)$, and we put
$$ A^{m \slash 2} v := A^{(m-1) \slash 2} (  A^{1 \slash 2} v ),\ v \in \Dom(A^{m \slash 2}) := \{ v \in \Dom(A^{(m-1) \slash 2}),\ A^{1\slash 2} v \in \Dom(A^{(m-1) \slash 2}) \}, $$
for each $m \in \N^*$. It turns out that the linear space $\Dom(A^{m \slash 2})$ endowed with the scalar product
$$ \langle v ,w \rangle_{\Dom(A^{m \slash 2})} := \sum_{j=0}^m \langle A^{j \slash 2} v , A^{j \slash 2} w \rangle_{0,\Omega}, $$
is Hilbertian, and it is established in Proposition \ref{pr-dom} that
\bel{z00}
\Dom(A^m) = \{ v \in H^{2m}(\Omega);\ v, Av, \ldots, A^{p-1} v \in H_0^1(\Omega) \},\ m \in \{p-1 \slash 2, p \},\ p \in \N^*,
\ee
provided $\pd \omega$ is $C^{2m}$ and $c \in W^{2m-1,\infty}(\Omega)$.
As a matter of fact we know from Corollary \ref{cor-eur} for any  natural number $m$, that the system \eqref{S1} admits a unique solution 
\bel{z0}
u \in \bigcap_{k=0}^{m+1} C^k([0,T]; H^{m+1-k}(\Omega)),
\ee 
provided the boundary $\pd \omega$ is $C^{m+1}$, the conductivity $c \in W^{m,\infty}(\Omega;\R)$ fulfills \eqref{c0a} and $(\theta_0,\theta_1) \in \Dom(A^{(m+1) \slash 2}) \times \Dom(A^{m \slash 2})$. 
Moreover, if $\| c \|_{W^{m,\infty}(\Omega)} \leq c_M$
for some constant $c_M>0$, then the solution $u$ to \eqref{S1} satisfies the estimate
\bel{z2}
\sum_{k=0}^{m+1} \| u \|_{C^k([0,T];H^{m+1-k}(\Omega))} \leq C \left( \| \theta_0 \|_{m+1,\Omega} + \| \theta_1 \|_{m,\Omega} \right),
\ee
where $C>0$ depends only on $T$, $\omega$ and $c_M$.

\subsubsection{Admissible conductivity coefficients and initial data}
In order to solve the inverse problem associated with \eqref{S1} we seek solutions belonging to $\cap_{k=3}^4 C^k([0,T];H^{5-k}(\Omega))$. Hence we chose $m=4$ in \eqref{z0} and impose on $c$ to be in $W^{4,\infty}(\Omega;\R)$ and satisfy
\eqref{c0a}. In what follows we note $c_M$ a positive constant fulfilling
\bel{c0b}
 \| c \|_{W^{4,\infty}(\Omega)} \leq c_M.
\ee
Since our strategy is based on a Carleman estimate for the hyperbolic system \eqref{S1}, it is also required that the condition
\bel{c1}
a' \cdot \nabla_{x'} c \geq \mathfrak{a}_0\ \mbox{in}\ \Omega,
\ee
hold for some $a'=(a_1,\ldots,a_{n-1}) \in \mathbb{S}^{n-1}$ and $\mathfrak{a}_0>0$.
Hence, given $\mathcal{O}_*$, a neighborhood of $\Gamma$ in $\R^{n-1}$, and $c_* \in W^{4,\infty}(\mathcal{O}_* \cap \Omega;\R)$ satisfying
\bel{c0a*}
c_* \geq c_m\ \mbox{and}\ a' \cdot \nabla_{x'} c_* \geq \mathfrak{a}_0\ \mbox{in}\ \mathcal{O}_* \cap \Omega,
\ee
we introduce the set $\Lambda_{\mathcal{O}_*}=\Lambda_{\mathcal{O}_*}(a',\mathfrak{a_0},c_*,c_m,c_M)$ of admissible conductivity coefficients as
\bel{1.7}
\Lambda_{\mathcal{O}_*}:=\{ c \in W^{4,\infty}(\Omega;\R)\ \mbox{obeying}\ \eqref{c0a}\ \mbox{and}\ \eqref{c0b}-\eqref{c1};\ c=c_*\ \mbox{in}\ \mathcal{O}_* \cap \Omega \}.
\ee

Notice that the above choice of $m=4$ dictates that $(\theta_0,\theta_1)$ be taken in $\Dom(A^{5 \slash 2}) \times \Dom(A^2)$, which is embedded in $H^5(\Omega) \times H^4(\Omega)$ according to \eqref{z00}.
Furthermore, it is required by the analysis of the inverse problem carried out in this article that $\theta_0$ be in $W^{3,\infty}(\Omega)$ and satisfy
\bel{S10}
- a' \cdot \nabla_{x'} \theta_0 \geq \eta_0\ \mbox{in}\ \omega_*,
\ee
for some $\eta_0>0$ and some open subset $\omega_*$ in $\R^{n-1}$, with $C^2$ boundary, satisfying
\bel{ic0} 
\overline{\omega \setminus \mathcal{O}_*} \subset \omega_* \subset \omega.
\ee
Thus, for $M_0>0$ such that
\bel{S100}
\| \theta_0 \|_{W^{3,\infty}(\Omega)} + \sum_{j=0}^1 \| \theta_j \|_{5-j,\Omega} \leq M_0,
\ee
we define the set $\Theta_{\omega_*}=\Theta_{\omega_*}(a',M_0,\eta_0)$
of admissible initial conditions $(\theta_0,\theta_1)$ as
\bel{ic2}
\Theta_{\omega_*}:= \left\{ (\theta_0, \theta_1)\in \left( \Dom(A^{5 \slash 2}) \cap W^{3,\infty}(\Omega) \right) 
\times \Dom(A^2),\ \mbox{fulfilling}\ \eqref{S10}\ \mbox{and}\ \eqref{S100} \right\}.
\ee

Having introduced all these notations we may now state the main result of this paper.

\subsubsection{Main result}
The following result claims H\"older stability in the inverse problem of determining $c$ in $\Omega_\ell$, where $\ell>0$ is arbitrary, from the knowledge of one boundary measurement  of the solution to \eqref{S1}, performed on $\Sigma_L$ for $L > \ell$ sufficiently large. The corresponding observation is viewed as a vector of the Hilbert space
$$\mathscr{H}( \Sigma_{L}) :=H^4(0,T;L^2(\Gamma_{L}))\cap H^3(0,T;H^1(\Gamma_{L})), $$
endowed with the norm,
$$ \norm{u}^2_{\mathscr{H}( \Sigma_{L})} :=\norm{u}^2_{H^4(0,T;L^2(\Gamma_{L}))}+\norm{u}^2_{H^3(0,T;H^1(\Gamma_{L}))},\ u\in
\mathscr{H}( \Sigma_{L}). $$

\begin{theorem}
\label{T.1} 
Assume that $\pd \omega$ is $C^5$ and let $\mathcal{O}_*$ be a neighborhood of $\Gamma$ in $\R^{n-1}$. For $a'=(a_1,\ldots,a_{n-1}) \in \mathbb{S}^{n-1}$, $\mathfrak{a}_0>0$, $c_m \in (0,1)$, $c_M>c_m$ and $c_* \in W^{4,\infty}(\mathcal{O}_* \cap \Omega;\R)$ fulfilling \eqref{c0a*}, pick $c_j$, $j=1,2$, in $\Lambda_{\mathcal{O}_*}(a',\mathfrak{a_0},c_*,c_m,c_M)$, defined by \eqref{1.7}. Further, given $M_0>0$, $\eta_0>0$ and an open subset $\omega_* \subset \R^{n-1}$ obeying \eqref{ic0}, chose
$\theta_0 \in \Theta_{\omega_*}(a',M_0,\eta_0)$, defined in \eqref{ic2}, and $\theta_1 = 0$. 

Then for any $\ell>0$ we may find $L >\ell$ and $T>0$,  
such that the $\bigcap_{k=0}^5 C^k([0,T],H^{5-k}(\Omega))$-solution $u_j$, $j=1,2$, to \eqref{S1} associated with $(\theta_0,\theta_1)$, where $c_j$ is substituted for $c$, satisfies 
$$
\norm{c_1-c_2}_{H^1(\Omega_\ell)} \leq C  \norm{ u_1-u_2 }_{\mathscr{H}( \Sigma_{L})}^\kappa.
$$
Here $C>0$ and $\kappa \in (0,1)$ are two constants depending only on $\omega$, $\ell$, $M_0$, $\eta_0$, $a'$, $\mathfrak{a}_0$, $c_\star$, $c_m$ and $c_M$.
\end{theorem}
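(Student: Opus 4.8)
The plan is to run the Bukhgeim--Klibanov scheme, now adapted to the cylinder $\Omega$, combining the global Carleman estimate of Section~\ref{sec-Carleman} with the structural link between the unknown conductivity and the Cauchy data. Set $u:=u_1-u_2$ and $f:=c_1-c_2$. Subtracting the two copies of \eqref{S1} gives
\[
\pd_t^2 u-\nabla\cdot c_1\nabla u=\nabla\cdot(f\nabla u_2)\ \text{in}\ Q,\qquad u(\cdot,0)=\pd_t u(\cdot,0)=0\ \text{in}\ \Omega,\qquad u=0\ \text{on}\ \Sigma,
\]
the homogeneous Cauchy data following from $u_1,u_2$ sharing the same $(\theta_0,\theta_1)$ with $\theta_1=0$. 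Evaluating the equation at $t=0$ and using $u(\cdot,0)=0$ produces the key identity $\pd_t^2 u(\cdot,0)=\nabla\cdot(f\nabla\theta_0)$, which is the encoding of $f$ (and of $\nabla f$) in the solution that I shall exploit. Since $\pd_t u(\cdot,0)=0$, and since one checks that $\pd_t^3 u(\cdot,0)=\nabla\cdot(f\nabla\theta_1)=0$ as $\theta_1=0$, the even extension of $u$ across $t=0$ to $\Omega\times(-T,T)$ is admissible, which is what permits centering the Carleman weight at $t=0$. The choice $m=4$ in \eqref{z2} guarantees $u\in\bigcap_{k=0}^5 C^k([0,T];H^{5-k}(\Omega))$, so all the time derivatives below are available.

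Because $\Omega$ is unbounded in $x_n$, the Carleman estimate cannot be used globally: I would fix a cutoff $\chi=\chi(x_n)$ equal to $1$ on $(-\ell,\ell)$ and supported in $(-L,L)$ and work with $\chi u$. The commutator $[\nabla\cdot c_1\nabla,\chi]u$ created by the localization is supported in the slab $\{\ell<|x_n|<L\}$, where the specially designed waveguide weight function $\varphi$ is strictly smaller than on $\Omega_\ell$ at $t=0$; bounding $u$ there by the a priori energy \eqref{z2} (finite thanks to \eqref{S100}), the commutator contribution carries a strictly negative relative exponent and becomes exponentially small in the Carleman parameter once $L$ is taken large enough. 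Controlling this slab term is the genuinely new point compared with the bounded-domain literature.

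Next I would apply the Carleman estimate to the localized second time derivative $\chi\,\pd_t^2 u$, whose trace at $t=0$ is exactly $\chi\,\nabla\cdot(f\nabla\theta_0)$. Its left-hand side dominates the weighted $L^2$-norms of $\pd_t^2 u$, $\nabla\pd_t^2 u$ and $\pd_t^3 u$ over $Q$, while the right-hand side consists of the weighted source $\nabla\cdot(f\nabla\pd_t^2 u_2)$---bounded, since $f$ is time-independent and $\pd_t^2 u_2$ is under control, by $s^{-1}\int_\Omega(|\nabla f|^2+|f|^2)e^{2s\varphi(\cdot,0)}\,dx$ after integrating the weight in $t$ (it being maximal at $t=0$)---and of boundary terms on $\Sigma_L$. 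As $u_1-u_2$ vanishes on $\Sigma$ by the homogeneous Dirichlet condition, these boundary terms reduce to the lateral conormal derivative of $u_1-u_2$ on $\Sigma_L$, i.e.\ to the observed datum, and are therefore bounded by $\norm{u_1-u_2}_{\mathscr{H}(\Sigma_L)}$, the two components $H^4(0,T;L^2(\Gamma_L))$ and $H^3(0,T;H^1(\Gamma_L))$ furnishing the control on the time derivatives up to order four. A standard Bukhgeim--Klibanov integration by parts in $t$, using again that $\pd_t\varphi(\cdot,0)=0$, then converts the space-time estimate into a bound at the initial time of the form
\[
\int_\Omega|\nabla\cdot(f\nabla\theta_0)|^2 e^{2s\varphi(\cdot,0)}\,dx\le C\Big(\norm{u_1-u_2}_{\mathscr{H}(\Sigma_L)}^2 e^{2sd_1}+s^{-1}\!\!\int_\Omega(|\nabla f|^2+|f|^2)e^{2s\varphi(\cdot,0)}\,dx+e^{2sd_2}\Big),
\]
where $d_1$ bounds $\varphi$ over $\Sigma_L$ and $d_2<d_0:=\inf_{\Omega_\ell}\varphi(\cdot,0)$ bounds it over the slab, the last two terms coming respectively from the source and from the cutoff. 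The pseudoconvexity making the underlying Carleman estimate valid is exactly what the monotonicity \eqref{c1}, $a'\cdot\nabla_{x'}c\ge\mathfrak{a}_0$, provides.

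It remains to turn the weighted norm of $\nabla\cdot(f\nabla\theta_0)$ into one of $f$ itself. Here the non-degeneracy \eqref{S10}, $-a'\cdot\nabla_{x'}\theta_0\ge\eta_0$, makes $f\mapsto\nabla\cdot(f\nabla\theta_0)=\nabla\theta_0\cdot\nabla f+(\Delta\theta_0)f$ a transport operator whose drift does not vanish in the distinguished direction $a'$ along which $\varphi$ is monotone; combined with the fact that $f=c_1-c_2=0$ on $\mathcal{O}_*\cap\Omega$ (both $c_j$ coincide with $c_*$ there by definition of $\Lambda_{\mathcal{O}_*}$), integrating along the characteristics from the lateral boundary recovers the full weighted norm, giving $\int_{\Omega_\ell}(|\nabla f|^2+|f|^2)e^{2s\varphi(\cdot,0)}\,dx\lesssim\int_\Omega|\nabla\cdot(f\nabla\theta_0)|^2 e^{2s\varphi(\cdot,0)}\,dx$. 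For $s$ large the source term on the right is then absorbed into the left (its tail in $|x_n|>\ell$ being exponentially small by the $x_n$-decay of $\varphi$), and bounding $f$ below by $e^{2sd_0}\norm{f}_{H^1(\Omega_\ell)}^2$ yields
\[
\norm{f}_{H^1(\Omega_\ell)}^2\le C\big(e^{2s(d_1-d_0)}\norm{u_1-u_2}_{\mathscr{H}(\Sigma_L)}^2+e^{2s(d_2-d_0)}\big),
\]
with $d_2<d_0$. Since the second exponent is negative, optimizing the free parameter $s\ge s_0$ by balancing the two terms (and covering the large-data range with the a priori bound $\norm{f}_{H^1(\Omega)}\le 2c_M$ from \eqref{c0b}) produces the asserted H\"older estimate with some $\kappa\in(0,1)$. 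The hard part will be precisely the interplay I have flagged: the infinite transverse direction, handled only through the cutoff and the $x_n$-decay of the tailored waveguide weight, together with the appearance of $c$ in the principal part, which forces $\nabla f$ into the source and obliges the entire argument---Carleman estimate and coefficient recovery alike---to be run at the $H^1$ level rather than in $L^2$.
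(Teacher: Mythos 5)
Your proposal follows essentially the same route as the paper's proof: linearization with $f=c_1-c_2$, even extension in time, an $x_n$-cutoff whose commutator is supported in a slab where the tailored waveguide weight is small, the waveguide Carleman estimate applied to time-differentiated localized solutions, a time-trace (Bukhgeim--Klibanov) step at $t=0$, a weighted first-order/transport estimate exploiting \eqref{S10} and $c_1=c_2=c_*$ near $\Gamma$ to pass from $\nabla\cdot(f\nabla\theta_0)$ back to $f$ in $H^1$, absorption of the source term thanks to the weight being maximal at $t=0$, and a final optimization in $s$ balanced against the trivial a priori bound. The only imprecisions are minor and fixable within your scheme: the paper applies the Carleman estimate to both $\pd_t^2 u$ and $\pd_t^3 u$ (the latter is needed to control $\nabla\pd_t^3 u$ in the trace step, consistent with your invocation of the $H^4(0,T;L^2(\Gamma_L))\cap H^3(0,T;H^1(\Gamma_L))$ data), and the transport estimate must be stated for the cutoff difference $\chi f$ over all of $\Omega_L$ rather than over $\Omega_\ell$ only, since otherwise the source term cannot be absorbed.
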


We stress out that the measurement of the observation data is performed on $\Gamma_{L}$ and not on the whole boundary $\partial \Omega_{L}$.





\section{Analysis of the direct problem}
\label{sec-direct}
\setcounter{equation}{0}

In this section we establish existence and uniqueness results as well as regularity properties, for the solution to hyperbolic \eqref{S1}-like IBVP systems.
The corresponding results are similar to the ones obtained for hyperbolic equations in bounded domains (see e. g. \cite[Sect. 7.2, Theorem 6]{Ev}) but since $\Omega$ is unbounded here, they cannot be derived from them. 

\subsection{Existence and uniqueness result}
With reference to \eqref{S1} we consider the boundary value problem
\bel{r0}
\left\{  \begin{array}{ll} \pd_t^2 v + A v = f & \mbox{in}\ Q \\ 
v(\cdot,0) = g,\ \pd_t v(\cdot,0)  =  h & \mbox{in}\ \Omega,
\end{array} \right.
\ee
where $f$, $g$ and $h$ are suitable data, and we recall from \footnote{Upon taking $H:=D(A^0)=L^2(\Omega)$, $V:=D(A^{1 \slash 2})=H_0^1(\Omega)$ and $a(t;u,v):=\langle A^{1 \slash 2} u , A^{1 \slash 2} v \rangle_{0,\Omega} = \int_{\Omega} c(x) \nabla u(x) \cdot \overline{\nabla v(x)} dx$ for $u,v \in V$ and all $t \in [0,T]$.}{\cite[Sect. 3, Theorem 8.2]{LM1}} the following existence and uniqueness result.

\begin{proposition}
\label{pr-r1}
Assume that $f \in C^0([0,T];\Dom(A^0))$, $g \in \Dom(A^{1 \slash 2})$ and $h \in \Dom(A^0)$. Then there exists a unique solution $v$ to \eqref{r0} such that
\bel{r1a}
\pd_t^k  v \in C^0([0,T];\Dom(A^{(1-k) \slash 2})),\ k=0,1.
\ee
Moreover we have the estimate
\bel{r1b}
\sum_{k=0}^1 \sup_{t \in [0,T]} \| \pd_t^k v(\cdot,t) \|_{\Dom(A^{(1-k) \slash 2})} \leq C \left( \| f \|_{C^0([0,T];\Dom(A^0))} + \| g \|_{\Dom(A^{1 \slash 2})} + \| h \|_{\Dom(A^0)} \right).
\ee
\end{proposition}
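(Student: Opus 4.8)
The plan is to recast the second-order equation \eqref{r0} as a first-order Cauchy problem on an energy Hilbert space and to solve it by semigroup theory, following the dictionary recorded in the footnote; equivalently, one may simply verify the hypotheses of the abstract result \cite[Sect. 3, Theorem 8.2]{LM1} and invoke it directly. Introduce the space $\E:=\Dom(A^{1 \slash 2}) \times \Dom(A^0)$ equipped with the inner product
$$ \langle (u_1,u_2),(w_1,w_2) \rangle_{\E} := \langle A^{1 \slash 2} u_1, A^{1 \slash 2} w_1 \rangle_{0,\Omega} + \langle u_2,w_2 \rangle_{0,\Omega}, $$
which is a genuine norm since $A$ is positive (the Dirichlet form controls the $H^1$-norm through the Poincar\'e inequality available in the bounded cross section $\omega$). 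Writing $w:=(v,\pd_t v)$, the system \eqref{r0} reads $\pd_t w = \mathcal{G} w + (0,f)$ with $w(0)=(g,h)$, where $\mathcal{G}:=\left( \begin{smallmatrix} 0 & I \\ -A & 0 \end{smallmatrix} \right)$ with domain $\Dom(\mathcal{G}):=\Dom(A) \times \Dom(A^{1 \slash 2})$.

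First I would check that $\mathcal{G}$ is skew-adjoint on $\E$, i.e. that $i \mathcal{G}$ is self-adjoint. Skew-symmetry is an integration-by-parts computation resting on the self-adjointness of $A$, while the range condition $\Ran(\mathcal{G} \pm I)=\E$ is solved explicitly thanks to the boundedness of $(A+I)^{-1}$ guaranteed by the positivity of $A$. By Stone's theorem $\mathcal{G}$ then generates a strongly continuous unitary group $\{ e^{t \mathcal{G}} \}_{t \in \R}$ on $\E$. Since $f \in C^0([0,T];\Dom(A^0))$ entails $(0,f) \in C^0([0,T];\E)$, Duhamel's formula
$$ w(t)=e^{t \mathcal{G}}(g,h)+\int_0^t e^{(t-s) \mathcal{G}} (0,f(s))\, ds $$
furnishes a mild solution $w \in C^0([0,T];\E)$. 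Reading off the two components yields $v \in C^0([0,T];\Dom(A^{1 \slash 2}))$ and $\pd_t v \in C^0([0,T];\Dom(A^0))$, which is precisely \eqref{r1a}.

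The estimate \eqref{r1b} then drops out of the unitarity of the group: taking $\E$-norms in Duhamel's identity and using $\| e^{t \mathcal{G}} \|_{\mathcal{L}(\E)}=1$ gives
$$ \| w(t) \|_{\E} \leq \| (g,h) \|_{\E} + \int_0^t \| f(s) \|_{0,\Omega}\, ds \leq \| (g,h) \|_{\E} + T \| f \|_{C^0([0,T];\Dom(A^0))}, $$
and the left-hand side controls $\sum_{k=0}^1 \| \pd_t^k v(\cdot,t) \|_{\Dom(A^{(1-k) \slash 2})}$ after reinstating the full graph norm on $\Dom(A^{1 \slash 2})$ (again using the positivity of $A$). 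Uniqueness is immediate, since a solution with $f=g=h=0$ has conserved energy $\| w(t) \|_{\E}=\| w(0) \|_{\E}=0$.

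The point requiring care — and the reason the bounded-domain proof of \cite[Sect. 7.2, Theorem 6]{Ev} does not transfer verbatim — is that $\Omega$ is unbounded, so the embedding $\Dom(A^{1 \slash 2})=H_0^1(\Omega) \hookrightarrow L^2(\Omega)$ is not compact and a Galerkin/eigenfunction-expansion argument is unavailable. The semigroup route circumvents this entirely, as it rests only on the self-adjointness and strict positivity of $A$ (the latter secured by the Poincar\'e inequality in the transverse variable $x'$), not on any spectral discreteness. Alternatively, the same conclusions follow from the spectral theorem by representing $v(t)$ through the operators $\cos(t A^{1 \slash 2})$, $A^{-1 \slash 2}\sin(t A^{1 \slash 2})$ and a Duhamel term, and estimating each via the functional calculus; I expect verifying the continuity in $t$ of these operator-valued functions to be the only mildly delicate point.
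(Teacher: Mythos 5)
Your proof is correct, but it takes a genuinely different route from the paper: the paper gives no argument of its own for this proposition, it simply invokes the variational theory of Lions--Magenes \cite[Sect.~3, Theorem 8.2]{LM1} for the triple $V=\Dom(A^{1\slash 2})=H_0^1(\Omega)\subset H=L^2(\Omega)$ with the form $a(u,w)=\int_\Omega c\nabla u\cdot\overline{\nabla w}\,dx$ --- exactly the ``invoke it directly'' option you mention in passing. Your main argument instead constructs the solution by hand: skew-adjointness of $\mathcal{G}$ on the energy space (the range condition being solved explicitly through $(A+I)^{-1}$), Stone's theorem, then Duhamel. This exploits structure the Lions--Magenes framework does not need, namely that $A$ is self-adjoint and time-independent (both true here), and in exchange buys a self-contained proof with sharp constants: unitarity gives the energy identity directly, so \eqref{r1b} holds with $C$ essentially $\max(1,T)$ times the norm-equivalence constants coming from the Poincar\'e inequality and $c\geq c_m$. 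Both routes correctly sidestep the failure of compactness in the unbounded waveguide, and your remark that the eigenfunction-Galerkin proof of \cite[Sect.~7.2, Theorem 6]{Ev} does not transfer echoes the paper's own comment at the start of Section 2. Two points you gloss, both standard but worth a line in a full write-up: first, that the second component of the mild solution really is $\partial_t v$ of the first component, which follows by approximating $(g,h)$ by data in $\Dom(\mathcal{G})$ and $f$ by functions in $C^0([0,T];\Dom(\mathcal{G}))$, then passing to the limit in $C^0([0,T];\E)$; second, uniqueness --- ``conserved energy'' is immediate for \emph{mild} solutions, but uniqueness in the class \eqref{r1a} of weak solutions of \eqref{r0} requires identifying any such solution with a mild solution of the first-order system (e.g.\ via Ball's characterization of mild solutions, or the usual mollification-in-time argument). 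Neither is a gap in substance.
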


\subsection{Improved regularity}

\begin{proposition}
\label{pr-r2}
Assume that $f \in C^1([0,T];\Dom(A^0))$, $g \in \Dom(A)$ and $h \in \Dom(A^{1 \slash 2})$. Then the solution $v$ to \eqref{r0} satisfies
\bel{r2a}
\pd_t^k  v \in C^0([0,T];\Dom(A^{(2-k) \slash 2})),\ k=0,1,2,
\ee
and we have the estimate
\bel{r2b}
\sum_{k=0}^2 \sup_{t \in [0,T]} \| \pd_t^k v(\cdot,t) \|_{\Dom(A^{(2-k) \slash 2})} \leq C \left( \| f \|_{C^1([0,T];\Dom(A^0))} + \| g \|_{\Dom(A)} + \| h \|_{\Dom(A^{1 \slash 2})} \right).
\ee
\end{proposition}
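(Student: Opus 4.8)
The plan is to deduce Proposition \ref{pr-r2} from Proposition \ref{pr-r1} by differentiating the evolution equation once in time. Formally, if $v$ solves \eqref{r0} then $w := \pd_t v$ ought to solve the companion problem
\beq
\pd_t^2 w + A w = \pd_t f\ \mbox{in}\ Q, \quad w(\cdot,0) = h, \quad \pd_t w(\cdot,0) = f(\cdot,0) - A g,
\eeq
the last identity being obtained by evaluating \eqref{r0} at $t=0$. Under the hypotheses of the proposition the data of this new problem are admissible for Proposition \ref{pr-r1}: indeed $\pd_t f \in C^0([0,T];\Dom(A^0))$ since $f \in C^1([0,T];\Dom(A^0))$, one has $h \in \Dom(A^{1 \slash 2})$ by assumption, and $f(\cdot,0) - A g \in \Dom(A^0)$ because $g \in \Dom(A)$.

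To turn this heuristic into a rigorous argument I would first let $\tilde w$ denote the unique solution to the companion problem furnished by Proposition \ref{pr-r1}, so that $\tilde w \in C^0([0,T];\Dom(A^{1 \slash 2}))$ and $\pd_t \tilde w \in C^0([0,T];\Dom(A^0))$, and then set $V(t) := g + \int_0^t \tilde w(s)\, ds$. Replacing $A \tilde w$ by $\pd_t f - \pd_t^2 \tilde w$ inside $\int_0^t A \tilde w\, ds$ and integrating shows that $V$ satisfies \eqref{r0} with the prescribed data $(g,h)$. Since the hypotheses of Proposition \ref{pr-r2} are stronger than those of Proposition \ref{pr-r1}, the uniqueness part of the latter forces $v = V$, whence $\pd_t v = \tilde w$. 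This yields at once the cases $k=1$ and $k=2$ of \eqref{r2a}, together with the corresponding bounds on $\pd_t v$ and $\pd_t^2 v$.

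The case $k=0$, namely $v \in C^0([0,T];\Dom(A))$, I would extract from the elliptic relation $A v = f - \pd_t^2 v$: its right-hand side lies in $C^0([0,T];\Dom(A^0))$ because $f \in C^0([0,T];\Dom(A^0))$ and $\pd_t^2 v = \pd_t \tilde w \in C^0([0,T];\Dom(A^0))$, while Proposition \ref{pr-r1} applied directly to $v$ already gives $v \in C^0([0,T];\Dom(A^{1 \slash 2}))$. In view of the graph-norm definition of $\Dom(A)$, the continuity in $L^2(\Omega)$ of $v$, of $A^{1 \slash 2} v$ and of $A v$ delivers $v \in C^0([0,T];\Dom(A))$. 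Estimate \eqref{r2b} then follows by adding the Proposition \ref{pr-r1} estimate for $v$ to the one for $\tilde w = \pd_t v$, controlling the new initial term through $\| f(\cdot,0) - A g \|_{\Dom(A^0)} \leq \| f \|_{C^0([0,T];\Dom(A^0))} + \| g \|_{\Dom(A)}$ and bounding $\| A v \| = \| f - \pd_t^2 v \|$ by means of the already-obtained norm of $\pd_t \tilde w$.

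The main obstacle will be the rigorous justification of the time-differentiation step, since the a priori regularity of $v$ provided by Proposition \ref{pr-r1} is too weak for $\pd_t^2 v$ and $A \pd_t v$ to be classical objects; this is precisely what the integration device $V(t) = g + \int_0^t \tilde w$ circumvents, at the cost of interpreting $A \tilde w$ as an element of the dual $\Dom(A^{1 \slash 2})'$ when manipulating the Bochner integral $\int_0^t A \tilde w\, ds$ and of reading the identity $\pd_t \tilde w(\cdot,0) = f(\cdot,0) - A g$ in $\Dom(A^0)$.
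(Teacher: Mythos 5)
Your proposal is correct and takes essentially the same route as the paper: differentiate \eqref{r0} in time, apply Proposition \ref{pr-r1} to $w=\pd_t v$ with data $\pd_t f$, $h$ and $f(\cdot,0)-Ag \in \Dom(A^0)$, then recover the case $k=0$ and its bound from the elliptic identity $Av = f - \pd_t^2 v$. The only difference is that you justify the formal time-differentiation rigorously via the integral representation $V(t)=g+\int_0^t \tilde w(s)\,ds$ and uniqueness, a step the paper's proof simply asserts.
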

\begin{proof}
By differentiating \eqref{r0} with respect to $t$, we check that $w:=\pd_t v$ obeys
\bel{r2c}
\left\{  \begin{array}{ll} \pd_t^2 w + A w = \pd_t f & \mbox{in}\ Q \\ 
w(\cdot,0) = h,\ \pd_t w(\cdot,0)  =  f(\cdot,0) - A g & \mbox{in}\ \Omega.
\end{array} \right.
\ee
Since $f(\cdot,0)-Ag$ is lying in $\Dom(A^0)$ then we have $\pd_t^{k+1} v = \pd_t^k w \in C^0([0,T];\Dom(A^{(1-k) \slash 2}))$ for $k=0,1$, with the estimate
\bea
\sum_{k=0}^1 \sup_{t \in [0,T]} \| \pd_t^{k+1} v(\cdot,t) \|_{\Dom(A^{(1-k) \slash 2})} & \leq & C \left( \| \pd_t f \|_{C^0([0,T];\Dom(A^0))} + \| h \|_{\Dom(A^{1 \slash 2})} +
\| f(\cdot,0) - A g \|_{\Dom(A^0)} \right)  \nonumber \\
& \leq &  C \left( \| f \|_{C^1([0,T];\Dom(A^0))} + 
\| g \|_{\Dom(A)} + \| h \|_{\Dom(A^{1 \slash 2})} \right) \label{r2d}
\eea
by Proposition \ref{pr-r1}. Further, as $Av = f - \pd_t^2 v$ from the first line in \eqref{r0}, we get that $v \in C^0([0,T];\Dom(A))$, and that
$\| v(\cdot,t) \|_{\Dom(A)}$ is majorized by the right hand side of \eqref{r2d}, uniformly in $t \in [0,T]$. This and \eqref{r2d} yield the desired result.
\end{proof}

\subsection{Higher regularity}

\begin{theorem}
\label{thm-hr}
Let $m$ be a nonnegative integer. We assume that $g \in \Dom(A^{(m+1) \slash 2})$, $h \in \Dom(A^{m \slash 2})$, and
$$ \pd_t^k f \in C^0([0,T];\Dom(A^{(m-k) \slash 2})),\ k=0,\ldots,m. $$
Then there exists a unique solution $v$ to \eqref{r0}, such that
\bel{hr0}
\pd_t^k v \in C^0([0,T];\Dom(A^{(m+1-k) \slash 2})),\ k=0,1,\ldots,m+1.
\ee
Moreover we have the estimate
\bea
& & \sum_{k=0}^{m+1} \sup_{t \in [0,T]} \| \pd_t^k v(\cdot,t) \|_{\Dom(A^{(m+1-k) \slash 2})} \nonumber \\
& \leq & C \left( \sum_{k=0}^m \| \pd_t^k f \|_{C^0([0,T];\Dom(A^{(m-k) \slash 2}))} + \| g \|_{\Dom(A^{(m+1) \slash 2})} + \| h \|_{\Dom(A^{m \slash 2})} \right).
\label{hr1}
\eea
\end{theorem}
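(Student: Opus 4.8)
The plan is to argue by induction on the integer $m \geq 0$. The base case $m=0$ is nothing but Proposition \ref{pr-r1}, which already furnishes existence, uniqueness, the regularity \eqref{hr0} and the estimate \eqref{hr1} for $m=0$; uniqueness for general $m$ then follows a fortiori from this $m=0$ statement. The case $m=1$ is Proposition \ref{pr-r2}, whose proof is precisely the mechanism I intend to iterate.

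For the inductive step, fix $m \geq 1$ and suppose the theorem holds with $m-1$ in place of $m$. Following the proof of Proposition \ref{pr-r2}, I would differentiate \eqref{r0} in time and set $w:=\pd_t v$, so that $w$ solves the system \eqref{r2c}, namely $\pd_t^2 w + A w = \pd_t f$ with $w(\cdot,0)=h$ and $\pd_t w(\cdot,0)=f(\cdot,0)-Ag$. The crux is to check that the data of this auxiliary problem meet the hypotheses of the theorem at level $m-1$. Indeed $h \in \Dom(A^{m \slash 2})$ by assumption; next $f(\cdot,0) \in \Dom(A^{m \slash 2}) \subset \Dom(A^{(m-1) \slash 2})$ from the $k=0$ hypothesis on $f$, while $g \in \Dom(A^{(m+1) \slash 2})$ gives $Ag \in \Dom(A^{(m-1) \slash 2})$, whence $f(\cdot,0)-Ag \in \Dom(A^{(m-1) \slash 2})$; finally the source $\pd_t f$ obeys $\pd_t^k(\pd_t f)=\pd_t^{k+1} f \in C^0([0,T];\Dom(A^{(m-1-k) \slash 2}))$ for $k=0,\ldots,m-1$, which is exactly the range $j=1,\ldots,m$ of the assumption on $f$ after reindexing $j=k+1$. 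The induction hypothesis applied to $w$ then yields $\pd_t^{k} w=\pd_t^{k+1} v \in C^0([0,T];\Dom(A^{(m-k) \slash 2}))$ for $k=0,\ldots,m$; reindexing once more, this is the claim \eqref{hr0} for all time-derivatives $\pd_t^j v$ with $j=1,\ldots,m+1$, together with the bound on these terms by the right-hand side of \eqref{hr1} (the data norms of $w$ being controlled by $\|h\|_{\Dom(A^{m \slash 2})}$, by $\|g\|_{\Dom(A^{(m+1) \slash 2})}$ through $\|Ag\|_{\Dom(A^{(m-1) \slash 2})}$, and by the $f$-norms).

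It remains to recover the single missing term $j=0$, that is $v \in C^0([0,T];\Dom(A^{(m+1) \slash 2}))$. For this I would read the first line of \eqref{r0} as $A v = f - \pd_t^2 v$ and apply $A^{(m-1) \slash 2}$: since $f \in C^0([0,T];\Dom(A^{m \slash 2})) \subset C^0([0,T];\Dom(A^{(m-1) \slash 2}))$ and $\pd_t^2 v \in C^0([0,T];\Dom(A^{(m-1) \slash 2}))$ by the $j=2$ case just obtained (legitimate because $m \geq 1$), the right-hand side lies in $\Dom(A^{(m-1) \slash 2})$, so $A^{(m+1) \slash 2} v = A^{(m-1) \slash 2}(f-\pd_t^2 v)$ is well defined and $v \in \Dom(A^{(m+1) \slash 2})$, continuously and uniformly in $t$, with $\|v(\cdot,t)\|_{\Dom(A^{(m+1) \slash 2})}$ dominated by the $f$-norm plus the already-controlled norm of $\pd_t^2 v$. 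Combining this elliptic bound with the estimates from the previous step produces \eqref{hr1} and closes the induction.

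I expect the only genuine obstacle to be the bookkeeping: one must verify precisely that each piece of data for the auxiliary problem $w$ falls into the domain required at level $m-1$, and that the reindexing $j=k+1$ aligns the conclusion of the induction hypothesis with the statement to be proved. The membership of $f(\cdot,0)-Ag$ in $\Dom(A^{(m-1) \slash 2})$ is the delicate point, since it couples the hypothesis on $g$ with the $k=0$ hypothesis on $f$; everything else is a routine transcription of the argument underlying Proposition \ref{pr-r2}.
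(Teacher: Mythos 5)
Your proof is correct and follows essentially the same route as the paper's: induction on $m$ (yours stepping from $m-1$ to $m$, the paper's from $m$ to $m+1$, which is the same thing), time-differentiation so that $w=\pd_t v$ solves \eqref{r2c} with data $h$ and $f(\cdot,0)-Ag$ verified to lie one level down, and then recovery of the single missing top-order term $v \in C^0([0,T];\Dom(A^{(m+1)\slash 2}))$ by reading $Av = f - \pd_t^2 v$ from \eqref{r0}. The delicate points you flag — the membership $f(\cdot,0)-Ag \in \Dom(A^{(m-1)\slash 2})$ and the reindexing of the hypotheses on $\pd_t f$ — are exactly the ones the paper checks, so nothing is missing.
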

\begin{proof}
\noindent a) The proof is by an induction on $m$, the case $m=0$ following from Proposition \ref{pr-r1}. 

\noindent b) We assume that the theorem is valid for some $m \in \N$ and suppose that
\bel{hr2}
\left\{ \begin{array}{l}
g \in \Dom(A^{(m+2) \slash 2}),\ h \in \Dom(A^{(m+1) \slash 2}),\\ \pd_t^k f \in C^0([0,T];\Dom(A^{(m+1-k) \slash 2})),\ k=0,\ldots,m+1. \end{array} \right.
\ee
We use the same strategy as in the proof of Proposotion \ref{pr-r2}. That is we differentiate \eqref{r0} with respect to $t$ and get that $w:=\pd_t v$ is solution to \eqref{r2c}. 
Next, using that $h \in \Dom(A^{(m+1) \slash 2})$, $f(0)-A g \in \Dom(A^{m \slash 2})$ and
$$ \pd_t^k ( \pd_t f) = \pd_t^{k+1} f \in C^0([0,T];\Dom(A^{(m-k) \slash 2})),\ k=0,\ldots,m, $$
from \eqref{hr2}, we get that $\pd_t^{k+1} v=\pd_t^k w \in C^0([0,T];\Dom(A^{(m+1-k) \slash 2}))$ for $k=0,1,\ldots,m+1$, and the estimate:
\beas
& & \sum_{k=0}^{m+1} \sup_{t \in [0,T]} \| \pd_t^{k+1} v(\cdot,t) \|_{\Dom(A^{(m+1-k) \slash 2})}  \\
& \leq & C \left( \sum_{k=0}^m \| \pd_t^{k+1} f \|_{C^0([0,T];\Dom(A^{(m-k) \slash 2}))} + \| h \|_{\Dom(A^{(m+1) \slash 2})} + \| f(0)- Ag  \|_{\Dom(A^{m \slash 2})} \right).
\eeas
This entails $\pd_t^k v \in C^0([0,T];\Dom(A^{(m+2-k) \slash 2}))$ for $k=1,\ldots,m+2$, and
\bea
& & \sum_{k=1}^{m+2} \sup_{t \in [0,T]} \| \pd_t^{k} v(\cdot,t) \|_{\Dom(A^{(m+2-k) \slash 2})} \nonumber \\
& \leq & C \left( \sum_{k=0}^{m+1} \| \pd_t^{k} f \|_{C^0([0,T];\Dom(A^{(m+1-k) \slash 2}))} + \| h \|_{\Dom(A^{(m+1) \slash 2})} + \| g  \|_{\Dom(A^{(m+2) \slash 2})} \right).
\label{hr3}
\eea
Further, as $Av=f-\pd_t^2 v$ from the first line in \eqref{r0}, we find out that
\bea
\| v(\cdot,t) \|_{\Dom(A^{(m+2) \slash 2})} & \leq & C \left( \| A v(\cdot,t) \|_{\Dom(A^{m \slash 2})} + \|  v \|_{\Dom(A^0)} \right) \nonumber \\
& \leq & C \left( \| f(\cdot,t) \|_{D(A^{m \slash 2})} + \| \pd_t^2 v(\cdot,t) \|_{D(A^{m \slash 2})} + \|  v \|_{\Dom(A^0)} \right). \label{hr4} 
\eea
Here we used the identity $\| v(\cdot,t) \|_{\Dom(A^{(m+2) \slash 2})}^2 = \| A v(\cdot,t) \|_{\Dom(A^{m \slash 2})}^2 + \sum_{k=0}^1 \| A^{k \slash 2} v(\cdot,t) \|_{D(A^0)}^2$ and the estimate $\| A^{1 \slash 2} v(\cdot,t) \|_{D(A^0)}\leq \sum_{k=0}^1 \| A^k v(\cdot,t) \|_{D(A^0)}$.
Since $\|  v \|_{\Dom(A^0)}$ and $\| \pd_t^2 v(\cdot,t) \|_{D(A^{m \slash 2})}$ are majorized by the right hand side of \eqref{hr3}, uniformly in $t \in [0,T]$, \eqref{hr3}-\eqref{hr4} yield the assertion of the theorem for $m+1$.
\end{proof}

\begin{remark}
The result of Theorem \ref{thm-hr} is similar to the one of \cite[Sect. 7.2, Theorem 6]{Ev}, which holds for a bounded domain. This can be seen from the characterization of the $\Dom(A^{k \slash 2})$ for $k=0,\ldots,m$, displayed in Subsection \ref{ss-domain}. Namely, it is worth noticing that the $m^{\rm th}$-order compatibility conditions \cite[Sect. 7.2, Eq. (62)]{Ev} imposed on $f$, $g$ and $h$, are actually hidden in the operatorial formulation of Theorem \ref{thm-hr}.
\end{remark}

\subsection{Characterizing the domain of $A^{m / 2}$ for $m \in \N^*$}
\label{ss-domain}

\subsubsection{Elliptic boundary regularity}
In this subsection we extend the classical elliptic boundary regularity result for the operator $\nabla \cdot c \nabla$ , which is well known in any sufficiently smooth bounded subdomain of $\R^n$ (see e.g. \cite[Sect. 6.3, Theorem 5]{Ev}), to the case of the infinite waveguide $\Omega$ under study. The proof of this result boils down to \cite[Lemma 2.4]{KPS2}
which claims elliptic boundary regularity for the Dirichlet Laplacian in $\Omega$.

\begin{lemma}
\label{lm-ebr}
Let $r$ be a nonnegative integer. We assume that $\pd \omega$ is $C^{r+2}$ and that $c \in W^{r+1,\infty}(\Omega)$ obeys \eqref{c0a}. Then, for any $\varphi \in H^r(\Omega)$, there exists a unique solution $v \in H^{r+2}(\Omega)$ to the boundary problem
\bel{ebr1} 
\left\{ \begin{array}{ll} - \nabla \cdot c \nabla v = \varphi & \mbox{in}\ \Omega \\ v  =  0 & \mbox{on}\ \pd \Omega. \end{array} \right.
\ee
Moreover we have the estimate
\bel{ebr2}
\| v \|_{r+2,\Omega} \leq C_r \| \varphi \|_{r,\Omega},
\ee
where $C_r$ is a positive constant depending only on $r$, $\omega$, the constant $c_m$ appearing in \eqref{c0a} and $\| c \|_{W^{r+1,\infty}(\Omega)}$.
\end{lemma}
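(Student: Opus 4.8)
The plan is to separate the statement into a variational existence/uniqueness step and a regularity bootstrap, the latter reducing the variable-coefficient operator $-\nabla \cdot c \nabla$ to the constant-coefficient Dirichlet Laplacian on $\Omega$, for which \cite[Lemma 2.4]{KPS2} supplies the only genuinely infinite-cylinder-specific ingredient.

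First I would establish existence and uniqueness of a weak solution $v \in H_0^1(\Omega)$. The sesquilinear form $a(u,w):=\int_\Omega c\, \nabla u \cdot \overline{\nabla w}\, dx$ is bounded on $H_0^1(\Omega)$ since $c \in W^{r+1,\infty}(\Omega) \subset L^\infty(\Omega)$, and it is coercive: because $\omega$ is bounded and $u(\cdot,x_n) \in H_0^1(\omega)$ for a.e. $x_n$, a cross-sectional Poincar\'e inequality gives $\|u\|_{0,\Omega} \leq C \|\nabla_{x'} u\|_{0,\Omega} \leq C \|\nabla u\|_{0,\Omega}$, whence $a(u,u) \geq c_m \|\nabla u\|_{0,\Omega}^2 \geq \delta \|u\|_{1,\Omega}^2$ for some $\delta>0$. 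Lax--Milgram then yields a unique $v \in H_0^1(\Omega)$ solving \eqref{ebr1} weakly, together with $\|v\|_{1,\Omega} \leq C \|\varphi\|_{0,\Omega}$; equivalently $v = A^{-1}\varphi$, the operator $A$ being boundedly invertible as it is bounded below by $c_m \lambda_1 > 0$, with $\lambda_1$ the first Dirichlet eigenvalue of $-\Delta_{x'}$ on $\omega$.

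The regularity claim I would obtain by induction on $r$, the engine being the rewriting of \eqref{ebr1} as a Poisson problem. Expanding the divergence and dividing by $c$, the equation reads $-\Delta v = \psi$ in $\Omega$, $v=0$ on $\partial\Omega$, where $\psi := c^{-1}\varphi + c^{-1}\nabla c \cdot \nabla v$. Since $c \geq c_m > 0$ and $c \in W^{r+1,\infty}(\Omega)$, the chain rule gives $c^{-1} \in W^{r+1,\infty}(\Omega)$ and hence $c^{-1}\nabla c \in W^{r,\infty}(\Omega)$, with norms controlled by $c_m$ and $\|c\|_{W^{r+1,\infty}(\Omega)}$. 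For the base case $r=0$ one has $\psi \in L^2(\Omega)$, and as $\partial\omega$ is $C^2$, \cite[Lemma 2.4]{KPS2} promotes $v$ to $H^2(\Omega)$ with $\|v\|_{2,\Omega} \leq C \|\psi\|_{0,\Omega} \leq C(\|\varphi\|_{0,\Omega} + \|v\|_{1,\Omega}) \leq C \|\varphi\|_{0,\Omega}$. For the inductive step I would apply the lemma at level $r-1$ to the present data, which satisfy the weaker hypotheses, obtaining $v \in H^{r+1}(\Omega)$ and thus $\nabla v \in H^r(\Omega)$; then, invoking the algebra property that multiplication by a $W^{r,\infty}(\Omega)$ function maps $H^r(\Omega)$ boundedly into itself, both $c^{-1}\varphi$ and $c^{-1}\nabla c \cdot \nabla v$ lie in $H^r(\Omega)$, so $\psi \in H^r(\Omega)$. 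Since $\partial\omega$ is $C^{r+2}$, a final application of \cite[Lemma 2.4]{KPS2} gives $v \in H^{r+2}(\Omega)$ with $\|v\|_{r+2,\Omega} \leq C \|\psi\|_{r,\Omega} \leq C(\|\varphi\|_{r,\Omega} + \|v\|_{r+1,\Omega}) \leq C \|\varphi\|_{r,\Omega}$, which closes the induction and establishes \eqref{ebr2}.

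The substantive difficulty, namely elliptic boundary regularity on the noncompact waveguide where the usual localization arguments must be made uniform in the axial variable $x_n$, is exactly what \cite[Lemma 2.4]{KPS2} resolves for the Laplacian. Consequently the only delicate point left here is bookkeeping: one must check that the variable-coefficient reduction preserves this uniformity, i.e. that every constant produced by the multiplication estimates depends on $c$ solely through $c_m$ and $\|c\|_{W^{r+1,\infty}(\Omega)}$, as asserted for $C_r$ in the statement. The remainder is a routine bootstrap.
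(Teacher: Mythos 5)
Your proposal is correct and follows essentially the same route as the paper: Lax--Milgram with the cross-sectional Poincar\'e inequality for existence and the energy estimate, then an induction on $r$ in which \eqref{ebr1} is rewritten as the Poisson problem $-\Delta v = c^{-1}(\varphi + \nabla c \cdot \nabla v)$ and the waveguide regularity lemma \cite[Lemma 2.4]{KPS2} is invoked at each step. The only difference is cosmetic (you run the induction from level $r-1$ to $r$ and make the $W^{r,\infty}\cdot H^{r}$ multiplication argument explicit, where the paper passes from $r$ to $r+1$ and leaves that point implicit).
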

\begin{proof}
The proof is by induction on $r$.\\
a) We first consider the case $r=0$.
Due to \eqref{c0a} there is a unique solution $v \in H_0^1(\Omega)$ to \eqref{ebr1} by the Lax-Milgram theorem. Moreover $v$ satisfies the energy estimate 
\bel{ebr3}
\| v \|_{1,\Omega} \leq C \| \varphi \|_{0,\Omega}, 
\ee
where the constant $C>0$ depends only on $\omega$ and $c_m$. Here we used \eqref{c0a} and the Poincar\'e inequality, which holds true in $\Omega$ since $\omega$ is bounded.
Furthermore, $v$ is solution to the boundary value problem
\bel{ebr4} 
\left\{ \begin{array}{ll} - \Delta v = f & \mbox{in}\ \Omega \\ v = 0 & \mbox{on}\ \pd \Omega, \end{array} \right.
\ee
where
\bel{ebr5}
f:=c^{-1} \left( \varphi + \nabla c \cdot \nabla v \right).
\ee
Since $f \in L^2(\Omega)$ then $v \in H^2(\Omega)$ by \cite[Lemma 2.4]{KPS2}, and
$\| v \|_{2,\Omega}$ is upper bounded, up to some multiplicative constant depending only on $\omega$, by $\| f \|_{0,\Omega}$. As a consequence we have
$$ \| v \|_{2,\Omega}  \leq C' \left( \| \varphi \|_{0,\Omega} + \| v \|_{1,\Omega} \right), $$
from \eqref{ebr5}, the constant $C'>0$ depending only on $\omega$, $c_m$ and $\| c \|_{W^{1,\infty}(\Omega)}$. This and \eqref{ebr3} yield \eqref{ebr2} with $r=0$. 

\noindent b) Suppose that the statement of the lemma is true for $r \in \N$ fixed, and assume that $\pd \omega$ is $C^{r+3}$, $c \in W^{r+2,\infty}(\Omega)$ and $\varphi \in H^{r+1}(\Omega)$. Hence the solution $v$ to \eqref{ebr4} belongs to $H^{r+2}(\Omega)$ and satisfies the estimate \eqref{ebr2}, by induction assumption, and we have
$f \in H^{r+1}(\Omega)$ in virtue of \eqref{ebr5}. Further, $v$ being solution to \eqref{ebr4} where the boundary $\pd \omega$ is $C^{r+3}$ then $v \in H^{r+3}(\Omega)$ by \cite[Lemma 2.4]{KPS2}. Moreover $\| v \|_{r+3,\Omega}$ is upper bounded (up to some multiplicative constant depending only on $r$ and $\omega$) by $\| f \|_{r+1,\Omega}$. 
From this and \eqref{ebr5} then follows that
$$ \| v \|_{r+3,\Omega} \leq C'' \left( \| \varphi \|_{r+1,\Omega} + \| v \|_{r+2,\Omega} \right), $$
where the constant $C''>0$ depends only on $r$, $\omega$, $c_m$ and $\| c \|_{W^{r+2,\infty}(\Omega)}$. Putting this together with \eqref{ebr2}, we obtain \eqref{ebr2} where $r$ is replaced by $r+1$, proving that the statement of the lemma remains valid upon substituting $r+1$ for $r$. 
\end{proof}

\subsubsection{The domain of $A^{m / 2}$ for $m \in N^*$}
In this subsection we characterize the domain of 
$A^{m \slash 2}$ for $m \in \N^*$.

\begin{proposition}
\label{pr-dom}
Let $m \in \N^*$ and let $k$ be either $m-1 \slash 2$ or $m$. Assume that $\pd \omega$ is $C^{2k}$ and that $c \in W^{2k-1,\infty}(\Omega)$ fullfills \eqref{c0a}. Then we have
$$
\Dom(A^k) = \{ u \in H^{2k}(\Omega),\ u, Au,\ \ldots, A^{m-1} u \in H_0^1(\Omega) \}.
$$
Moreover, the norm associated with $\Dom(A^k)$ is equivalent to the usual one in $H^{2k}(\Omega)$: we may find a constant $c(k)>1$, depending only on $k$, $\omega$, the constant $c_m$ defined in \eqref{c0a} and $\| c \|_{W^{2k-1,\infty}(\Omega)}$, such that
we have
$$
c(k)^{-1} \| u \|_{\Dom(A^k)} \leq \| u \|_{2k,\Omega} \leq c(k) \| u \|_{\Dom(A^k)},\ u \in D(A^k).
$$
\end{proposition}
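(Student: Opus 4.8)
The plan is to establish both assertions (for $k=m-1/2$ and for $k=m$) simultaneously by induction on $m\in\N^*$, reducing the Sobolev regularity gain at each step to the elliptic estimate of Lemma \ref{lm-ebr}. The driving idea is that membership in $\Dom(A^k)$ should be decoded, one factor of $A$ at a time, into a list of homogeneous boundary conditions $A^ju=0$ on $\pd\Omega$ together with a single global $H^{2k}$ bound supplied by elliptic regularity. First I would record the operatorial recursion that powers the induction: from the definition of $\Dom(A^{\ell/2})$, and the fact that the iterated operator agrees with the self-adjoint power defined by the spectral theorem, one has for every integer $\ell\ge2$
\[
\Dom(A^{\ell/2})=\{v\in\Dom(A):\ Av\in\Dom(A^{(\ell-2)/2})\},\qquad A^{\ell/2}v=A^{(\ell-2)/2}(Av).
\]
Taking $\ell=2m$ (resp. $\ell=2m-1$) expresses $\Dom(A^m)$ (resp. $\Dom(A^{m-1/2})$) in terms of the \emph{same-parity} space one level down, $\Dom(A^{m-1})$ (resp. $\Dom(A^{m-3/2})$). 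The two base cases are $\Dom(A^{1/2})=H_0^1(\Omega)$, which is given, and $\Dom(A)=\{u\in H^2(\Omega):\ u\in H_0^1(\Omega)\}$: indeed $u\in\Dom(A)$ means $u\in H_0^1(\Omega)$ with $Au\in L^2(\Omega)$, so Lemma \ref{lm-ebr} with $r=0$ gives $u\in H^2(\Omega)$ and $\|u\|_{2,\Omega}\le C\|Au\|_{0,\Omega}$, while the reverse inclusion is clear.

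For the inductive step I would argue as follows, writing the proof for $k=m$ (the case $k=m-1/2$ being identical with $\Dom(A^{m-1})$ replaced by $\Dom(A^{m-3/2})$ and the index $2m$ by $2m-1$). By the recursion, $u\in\Dom(A^m)$ iff $u\in\Dom(A)$ and $Au\in\Dom(A^{m-1})$. The base case gives $u\in H_0^1(\Omega)$, and the induction hypothesis applied to $Au$ gives $Au\in H^{2m-2}(\Omega)$ together with $A^j(Au)=A^{j+1}u\in H_0^1(\Omega)$ for $j=0,\dots,m-2$; collecting these yields precisely $A^iu\in H_0^1(\Omega)$ for $i=0,\dots,m-1$. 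It remains to upgrade the regularity from $H^{2m-2}$ to $H^{2m}$, and for this I apply Lemma \ref{lm-ebr} to $u$ \emph{itself}, viewed as the solution of $-\nabla\cdot c\nabla u=\varphi$ with datum $\varphi:=Au\in H^{2m-2}(\Omega)$ and homogeneous Dirichlet condition: with $r=2m-2$ (so that $\pd\omega$ is $C^{2m}$ and $c\in W^{2m-1,\infty}(\Omega)$, exactly the hypotheses for $k=m$) it yields $u\in H^{2m}(\Omega)$ and $\|u\|_{2m,\Omega}\le C\|Au\|_{2m-2,\Omega}$. The reverse inclusion is immediate: if $u\in H^{2m}(\Omega)$ with $A^iu\in H_0^1(\Omega)$ for $i\le m-1$, then $u\in\Dom(A)$ and $Au\in H^{2m-2}(\Omega)$ carries exactly the boundary conditions needed for the induction hypothesis to place it in $\Dom(A^{m-1})$, whence $u\in\Dom(A^m)$.

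The norm equivalence follows from the same two mechanisms. The bound $\|u\|_{\Dom(A^k)}\le c(k)\|u\|_{2k,\Omega}$ is the easy direction: each integer term $\|A^iu\|_{0,\Omega}$ is controlled by $\|u\|_{2i,\Omega}$ because $A^i$ is a differential operator of order $2i$ with coefficients in $W^{2k-1,\infty}(\Omega)$, and each half-integer term satisfies $\|A^{i+1/2}u\|_{0,\Omega}^2=q_A[A^iu]\le\|c\|_{\infty}\|A^iu\|_{1,\Omega}^2\le C\|u\|_{2i+1,\Omega}^2$. For the reverse bound I combine the elliptic estimate $\|u\|_{2m,\Omega}\le C\|Au\|_{2m-2,\Omega}$ with the induction hypothesis norm equivalence applied to $Au$, observing that $\|Au\|_{\Dom(A^{m-1})}^2=\sum_{i=2}^{2m}\|A^{i/2}u\|_{0,\Omega}^2\le\|u\|_{\Dom(A^m)}^2$ (and analogously $\sum_{i=2}^{2m-1}$ in the half-integer case). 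Tracking the finitely many constants that accumulate shows $c(k)$ depends only on $k$, $\omega$, $c_m$ and $\|c\|_{W^{2k-1,\infty}(\Omega)}$, as claimed.

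The step I expect to be the most delicate is the operatorial recursion together with its bookkeeping: one must check that peeling off a factor $A$ is legitimate for the iteratively defined powers $A^{\ell/2}$ (equivalently, that they coincide with the spectral-calculus powers), and that the resulting boundary conditions $A^ju\in H_0^1(\Omega)$ fall into exactly the right slots and in the right number for both parities, all while the smoothness indices $\pd\omega\in C^{2k}$ and $c\in W^{2k-1,\infty}(\Omega)$ stay matched to the hypotheses of Lemma \ref{lm-ebr} at every stage. Once this reduction is set up correctly, the regularity gain is a direct appeal to elliptic boundary regularity and the estimates close routinely by induction.
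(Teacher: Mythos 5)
Your proof is correct and follows essentially the same route as the paper's: induction on $m$ anchored by $\Dom(A^{1\slash 2})=H_0^1(\Omega)$ and $\Dom(A)\subset H^2(\Omega)$, with the elliptic boundary regularity of Lemma \ref{lm-ebr} (applied with $\varphi=Au$ and the top admissible value of $r$) supplying the two-derivative gain and the norm bound at each step. The only cosmetic differences are that you peel the factor of $A$ from the inside ($u\in\Dom(A)$ and $Au\in\Dom(A^{m-1})$) whereas the paper uses the recursion $\Dom(A^{k+1})=\{u\in\Dom(A^{k}):\ Au\in\Dom(A^{k})\}$, and that you spell out the reverse inclusion and the easy direction of the norm equivalence, which the paper dismisses with ``it suffices to show.''
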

\begin{proof}
It suffices to show that
\bel{d3}
\Dom(A^k) \subset \{ u \in H^{2k}(\Omega),\ u, Au,\ \ldots, A^{m-1} u \in H_0^1(\Omega) \},
\ee
and
\bel{d4}
\| u \|_{2k,\Omega} \leq c(k) \| u \|_{\Dom(A^k)},\ u \in D(A^k).
\ee
The proof is by induction on $m$.

\noindent a) We start with $m=1$ and notice from the very definition of $A^{1 \slash 2}$ that $\Dom(A^{1 \slash 2}) = \Dom (q_A) = H_0^1(\Omega)$. Moreover we have
$$ \| A^{1 \slash 2} u \|_{0,\Omega}^2 = q_A[u] \geq c_m \| \nabla u \|_{0,\Omega}^2,\ u  \in \Dom(A^{1 \slash 2}), $$
in virtue of \eqref{c0a}.
Bearing in mind that $c_m \in (0,1)$, we obtain that $\| u \|_{1,\Omega} \leq c_m^{-1 \slash 2} \| u \|_{\Dom(A^{1 \slash 2})}$ for every $u \in \Dom(A^{1 \slash 2})$. This establishes \eqref{d4} for $k=1 \slash 2$.

Similarly, bearing in mind that $\Dom(A) = \{ u \in H_0^1(\Omega),\ A u \in L^2(\Omega) \}$, we apply Lemma \ref{lm-ebr} with $r=0$ and $\varphi = A u$, where $u \in \Dom(A)$ is arbitrary. We find that $u \in H^2(\Omega)$ satisfies $\| u \|_{2,\Omega} \leq C_0 \| A u \|_{0,\Omega}$, which entails \eqref{d3}-\eqref{d4} for $k=1$.

\noindent b) Let us now suppose that the statement of the lemma is true for some $m \in \N^*$ fixed. Pick $k \in \{ m-1 \slash 2, m \}$ and assume that $\pd \omega$ is $C^{2(k+1)}$ and that
$c \in W^{2k+1,\infty}(\Omega)$ satisfies \eqref{c0a}. 
As $\Dom(A^{k+1})= \{ u \in \Dom(A^{k}),\ A u \in \Dom(A^{k}) \}$, we deduce from the induction assumption that
we have 
$$ \Dom(A^{k+1})= \{ u \in H^{2k}(\Omega),\ A u \in H^{2k}(\Omega)\ \mbox{and}\ u,\ldots, A^m u \in H_0^1(\Omega) \}, $$
with $\| A^j u \|_{2k,\Omega} \leq c(k) \| A^j u \|_{D(A^k)}$ for $j=0,1$.
Thus, applying Lemma \ref{lm-ebr}, with $r=2k$ and $\varphi = Au$, for $u \in \Dom(A^{k+1})$, we get that $u \in H^{2(k+1)}(\Omega)$, proving 
\eqref{d3} where $(k+1,m+1)$ is substituted for $(k,m)$. Moreover, it holds true that
$$
\| u \|_{2(k+1),\Omega} \leq C_{2k} \| A u \|_{2k,\Omega},
$$
and since $Au \in D(A^k)$, the induction assumption yields
$\| A u \|_{2k,\Omega} \leq c(k) \| A u \|_{\Dom(A^k)}$. Therefore $\| u \|_{2(k+1),\Omega}$ is majorized, up to a multiplicative constant depending only on $\omega$, $c$ and $m$, by $\| u \|_{\Dom(A^{k+1})}$, which is \eqref{d4} where $k+1$ is substituted for $k$.
\end{proof}

In view of Theorem \ref{thm-hr} and Proposition \ref{pr-dom} we obtain the following result.

\begin{follow}
\label{cor-eur}
Let $m$ be a natural number. Assume that $\pd \omega$ is $C^{m+1}$, that $c \in W^{m,\infty}(\Omega)$ fulfills \eqref{c0a} and
that $(\theta_0,\theta_1) \in \Dom(A^{(m+1) \slash 2}) \times \Dom(A^{m \slash 2})$. Then the initial boundary value problem \eqref{S1} admits a unique solution 
$$u \in \bigcap_{k=0}^{m+1} C^k([0,T]; H^{m+1-k}(\Omega)). $$
Moreover, we have
\bel{d4b}
\sum_{k=0}^{m+1} \| u \|_{C^k([0,T];H^{m+1-k}(\Omega))} \leq C \left( \| \theta_0 \|_{m+1,\Omega} + \| \theta_1 \|_{m,\Omega} \right),
\ee
for some constant $C>0$ depending only on $T$, $\omega$ and $\| c \|_{W^{m,\infty}(\Omega)}$.
\end{follow}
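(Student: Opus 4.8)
The plan is to recognize the initial boundary value problem \eqref{S1} as the particular instance of the abstract second-order Cauchy problem \eqref{r0} obtained by taking $f=0$, $g=\theta_0$ and $h=\theta_1$ (recalling that $A$ acts as $-\nabla \cdot c \nabla$), and then to splice together the two preceding results: Theorem \ref{thm-hr} for existence, uniqueness and the a priori estimate expressed in the scale of operator domains $\Dom(A^{\cdot \slash 2})$, and Proposition \ref{pr-dom} to transfer these conclusions to the Sobolev scale $H^{\cdot}(\Omega)$.

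First I would verify the hypotheses of Theorem \ref{thm-hr}. Since $f \equiv 0$ the regularity requirement on $\pd_t^k f$ is trivially met, while the standing assumptions $\theta_0 \in \Dom(A^{(m+1) \slash 2})$ and $\theta_1 \in \Dom(A^{m \slash 2})$ are exactly $g \in \Dom(A^{(m+1) \slash 2})$ and $h \in \Dom(A^{m \slash 2})$. Thus Theorem \ref{thm-hr} furnishes a unique $u$ satisfying
$$ \pd_t^k u \in C^0([0,T];\Dom(A^{(m+1-k) \slash 2})),\quad k=0,1,\ldots,m+1, $$
together with the estimate \eqref{hr1}, whose right-hand side reduces to $C( \| \theta_0 \|_{\Dom(A^{(m+1)\slash 2})} + \| \theta_1 \|_{\Dom(A^{m \slash 2})} )$ because $f$ vanishes.

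Next I would translate these operatorial statements into Sobolev ones via Proposition \ref{pr-dom}. The powers appearing above are $A^{(m+1-k) \slash 2}$ with $m+1-k$ ranging over $\{0,1,\ldots,m+1\}$; the largest corresponds to $k=0$, requiring $\pd \omega$ to be $C^{m+1}$ and $c \in W^{m,\infty}(\Omega)$, which are precisely the assumptions of the corollary, so Proposition \ref{pr-dom} applies at every intermediate index (the case $m+1-k=0$ being the trivial identity $\Dom(A^0)=L^2(\Omega)$). It yields both the set identification of $\Dom(A^{(m+1-k) \slash 2})$ with the corresponding subspace of $H^{m+1-k}(\Omega)$ and, more importantly, the two-sided norm equivalence with constants depending only on $\omega$, $c_m$ and $\| c \|_{W^{m,\infty}(\Omega)}$. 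Applying this equivalence to each term converts $\pd_t^k u \in C^0([0,T];\Dom(A^{(m+1-k) \slash 2}))$ into $\pd_t^k u \in C^0([0,T];H^{m+1-k}(\Omega))$; since $H^{m+1-j}(\Omega) \hookrightarrow H^{m+1-k}(\Omega)$ whenever $j \leq k$, this family is equivalent to the asserted membership $u \in \bigcap_{k=0}^{m+1} C^k([0,T];H^{m+1-k}(\Omega))$. Feeding the same equivalence into \eqref{hr1} produces \eqref{d4b} with a constant $C$ depending only on $T$, $\omega$ and $\| c \|_{W^{m,\infty}(\Omega)}$, as claimed.

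There is no genuine analytic difficulty here, the statement being a repackaging of the two prior results; the only point demanding care is the bookkeeping. I would pay particular attention to confirming that the single pair of smoothness hypotheses $\pd \omega \in C^{m+1}$ and $c \in W^{m,\infty}(\Omega)$ is strong enough to legitimize every application of Proposition \ref{pr-dom} across the whole range of half-integer and integer powers $(m+1-k)\slash 2$, and to checking that the nested formulation $\bigcap_{k=0}^{m+1} C^k([0,T];H^{m+1-k}(\Omega))$ is indeed equivalent to the collection of statements $\pd_t^k u \in C^0([0,T];H^{m+1-k}(\Omega))$ delivered by Theorem \ref{thm-hr}, an equivalence that rests precisely on the Sobolev embedding just noted.
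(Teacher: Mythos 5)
Your proposal is correct and is exactly the argument the paper intends: the paper states the corollary without a written proof, noting only that it follows ``in view of Theorem \ref{thm-hr} and Proposition \ref{pr-dom}'', which is precisely your combination of the abstract regularity theorem (with $f=0$, $g=\theta_0$, $h=\theta_1$) and the domain characterization with its two-sided norm equivalence. Your additional bookkeeping checks --- that $\pd\omega\in C^{m+1}$ and $c\in W^{m,\infty}(\Omega)$ suffice for every power $(m+1-k)\slash 2$, and that the Sobolev embeddings $H^{m+1-j}(\Omega)\hookrightarrow H^{m+1-k}(\Omega)$ for $j\le k$ convert the family of memberships into the nested intersection --- are exactly the details the paper leaves implicit.
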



\section{Global Carleman estimate for hyperbolic equations in cylindrical domains}
\label{sec-Carleman}
\setcounter{equation}{0}
In this section we establish a global Carleman estimate for the system \eqref{S1}. To this purpose we start by time-symmetrizing the solution $u$ of \eqref{S1}. Namely, we put
\bel{t-sym}
u(x,t) := u(x,-t),\ x \in \Omega,\ t \in (-T,0).
\ee
Under the conditions of Theorem \ref{T.1}, and since 
$\theta_1 = 0$, it is not hard to check that 
$$u \in \bigcap_{k=3}^4 C^k([-T,T];H^{5-k}(\Omega)). $$
With a slight abuse of notations we put $Q := \Omega \times (-T,T)$, $\Sigma := \Gamma \times (-T,T)$ and $Q_L := \Omega_L \times (-T,T)$, $\Sigma_L := \partial \omega \times (-L,L) \times (-T,T)$ for any $L>0$, in the remaining part of this text.

\subsection{The case of second order hyperbolic operators}
In view of establishing a Carleman estimate for the operator
\bel{c1b}
A:=A(x,t,\pd)=\partial_t^2 - \nabla \cdot c \nabla + R, 
\ee
where $R$ is a first-order partial differential operator with $L^\infty(Q)$ coefficients, we define for every $\delta>0$ and $\gamma>0$ the following weight functions:
\bel{c3}
\psi(x,t)=\psi_\delta(x,t):=| x'- \delta a' |^2-x_n^2-t^2\ \mbox{and}\
\varphi(x,t)=\varphi_{\delta,\gamma}(x,t):={\rm e}^{\gamma \psi(x,t)},\ (x,t) \in Q.
\ee
Further for $L>0$ and $T>0$ we introduce the space
\bel{X_0^2}
\mathcal{X}_{L,T} := \left\{ u \in H^2(Q_{L});\ \partial_{x_n}^j u(\cdot, \pm L, \cdot)=0\ \mbox{in}\ \omega \times (-T, T),\ \partial_{t}^j u(\cdot, \pm T)=0\ \mbox{in}\
\Omega_L,\ j=0,1 \right\}, 
\ee
and state the:

\begin{proposition}
\label{pr-ic}
Let $A$ be defined by \eqref{c1b}, where $c$ obeys \eqref{c0a}--\eqref{c1}, and let $\ell$ be positive. Then there exist $\delta_0>0$ and $\gamma_0>0$ such that for all 
$\delta \geq \delta_0$ and $\gamma \geq \gamma_0$, we may find $L>\ell$, $T>0$ and $s_0>0$ for which the estimate
\bel{c16}
s \sum_{j=0,1} s^{2(1-j)} \| {\rm e}^{s \varphi} \nabla_{x,t}^j v \|_{0,Q_L}^2
\leq C \left( \| {\rm e}^{s\varphi} A v \|_{0,Q_L}^2 + s \sum_{j=0,1} s^{2(1-j)} \| {\rm e}^{s\varphi} \nabla_{x,t}^j v \|_{0,\pd Q_L}^2 \right),
\ee
holds for any $s \geq s_0$ and $v \in \mathcal{X}_{L,T}$. Here $C$ is a positive constant depending only on $\omega$, $a'$, $\mathfrak{a}_0$, $\delta_0$, $\gamma_0$, $s_0$, $c_m$ and $c_M$.

Moreover there exists a constant $d_\ell>0$, depending only on $\omega$, $\ell$, $\delta_0$ and $\gamma_0$, such that the weight function $\varphi$ defined by \eqref{c3} satisfies
\bel{d5}
\varphi(x', x_n, 0) \geq d_\ell,\ (x', x_n) \in \overline{\omega} \times [-\ell, \ell],
\ee
and we may find $\eps \in (0, (L-\ell) \slash 2)$ and $\nu >0$ so small that we have:
\bea
\max_{x \in\overline{\omega}\times [-L, L]}
\varphi(x', x_n, t) & \leq & \tilde{d}_\ell:=d_\ell {\rm e}^{-\gamma  \nu^2},\ |t|\in [T-2\eps, T], \label{d6} \\
\max_{(x', t) \in \overline{\omega} \times [-T, T]} \varphi(x', x_n, t) & \leq & \tilde{d}_\ell,\ |x_n|\in [L-2\eps, L]. \label{d7}
\eea
\end{proposition}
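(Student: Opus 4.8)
The plan is to derive \eqref{c16} by the classical conjugation-and-splitting scheme underlying all Bukhgeim--Klibanov type estimates, the whole novelty residing in the cylindrical weight \eqref{c3} and in the way the sign condition \eqref{c1} is exploited. I would write $A=\pd_t^2-c\Delta-\nabla c\cdot\nabla+R$, keep the principal part $\pd_t^2-c\Delta$ and treat the first-order terms $-\nabla c\cdot\nabla+R$ as a perturbation. Setting $w:=e^{s\varphi}v$ and computing $P_s w:=e^{s\varphi}A(e^{-s\varphi}w)$, I would regroup the conjugate of the principal part (modulo zeroth-order contributions thrown into the perturbation) into a formally self-adjoint part
\[
P_+w=\pd_t^2 w-c\Delta w+s^2\big((\pd_t\varphi)^2-c|\nabla\varphi|^2\big)w
\]
and a formally skew-adjoint part
\[
P_-w=-2s\big(\pd_t\varphi\,\pd_t w-c\,\nabla\varphi\cdot\nabla w\big)-s\big(\pd_t^2\varphi-c\Delta\varphi\big)w
\]
in $L^2(Q_L)$. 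Expanding $\|P_s w\|_{0,Q_L}^2$ produces $\|P_+w\|_{0,Q_L}^2+\|P_-w\|_{0,Q_L}^2+2\,\Pre{\langle P_+w,P_-w\rangle_{0,Q_L}}$ together with cross terms involving the perturbation, and integrating the cross term by parts yields a quadratic form in $(w,\nabla_{x,t}w)$ plus integrals over $\pd Q_L$. The vanishing conditions built into $\mathcal{X}_{L,T}$, namely $\pd_{x_n}^j w=0$ on $\{x_n=\pm L\}$ and $\pd_t^j w=0$ on $\{t=\pm T\}$ for $j=0,1$, annihilate every boundary contribution except those on the lateral part $\pd\omega\times(-L,L)\times(-T,T)$, which are exactly controlled by the boundary term on the right-hand side of \eqref{c16}; the order-one perturbation is absorbed by the left-hand side once $s\geq s_0$, its square being dominated by $\|e^{s\varphi}\nabla_{x,t}w\|^2+\|e^{s\varphi}w\|^2$. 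Passing back from $w$ to $v$ via $\nabla_{x,t}w=e^{s\varphi}(\nabla_{x,t}v+s\,v\,\nabla_{x,t}\varphi)$ is routine and respects the powers of $s$ appearing in \eqref{c16}.

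The heart of the matter, and the step I expect to be the main obstacle, is the pointwise positivity (pseudoconvexity) of the resulting quadratic form on all of $Q_L$, i.e.\ that it bounds $s^3|w|^2+s|\nabla_{x,t}w|^2$ from below. I would use $\nabla_{x,t}\varphi=\gamma\varphi\,\nabla_{x,t}\psi$ with $\nabla_{x,t}\psi=2(x'-\delta a',-x_n,-t)$ and the constant Hessian $D^2\psi=2\,\mathrm{diag}(I_{n-1},-1,-1)$, noting first that $\psi$ has no critical point on $\overline{Q_L}$ since $\delta a'\notin\overline{\omega}$ once $\delta$ is large. Because $D^2\varphi=\gamma\varphi\big(D^2\psi+\gamma\,\nabla\psi\otimes\nabla\psi\big)$, choosing $\gamma\geq\gamma_0$ large lets the $\gamma^2$-contributions dominate every $\gamma$-linear remainder, and the strict convexity of $\psi$ in $x'$ played against its concavity in $(x_n,t)$ yields a positive leading form as in the constant-coefficient case. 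The genuinely waveguide-specific difficulty is the contribution of the non-constant conductivity, which enters the leading part of the Carleman form through $\nabla c\cdot\nabla\psi$, whose cross-sectional piece equals $2\,\nabla_{x'}c\cdot(x'-\delta a')=2\,\nabla_{x'}c\cdot x'-2\delta\,a'\cdot\nabla_{x'}c$. Invoking \eqref{c1} together with $c\in W^{1,\infty}(\Omega)$ and the boundedness of $\omega$ gives $2\,\nabla_{x'}c\cdot(x'-\delta a')\leq C-2\delta\mathfrak{a}_0$, so that taking $\delta\geq\delta_0$ large forces this term to be large and of the definite sign needed to keep the form positive. I would accordingly fix $\gamma_0$ and $\delta_0$ from these two requirements, observing that the pointwise positivity is uniform in the time and axial variables because the symbol computation does not see $\{t=\pm T\}$ or $\{x_n=\pm L\}$; this is what frees $L$ and $T$ to be chosen as large as needed. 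Finally $s_0$ is taken large enough to absorb the lower-order terms generated by $R$, by $-\nabla c\cdot\nabla$, and by the commutators, the non-characteristic degeneracies being harmless thanks to \eqref{c0a}.

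The three weight inequalities \eqref{d5}--\eqref{d7} are elementary consequences of the explicit form of $\psi$ and require none of the above machinery. For \eqref{d5}, at $t=0$ one has $\psi(x',x_n,0)=|x'-\delta a'|^2-x_n^2\geq(\delta_0-\sup_{\omega}|x'|)^2-\ell^2=:\mu$ on $\overline{\omega}\times[-\ell,\ell]$ for every $\delta\geq\delta_0$, with $\mu>0$ as soon as $\delta_0>\sup_{\omega}|x'|+\ell$; since $\psi(\cdot,0)\geq\mu>0$ and $\gamma\geq\gamma_0$, this yields $\varphi(x',x_n,0)=e^{\gamma\psi}\geq e^{\gamma_0\mu}=:d_\ell$, a constant depending only on $\omega$, $\ell$, $\delta_0$ and $\gamma_0$. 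For \eqref{d6} and \eqref{d7} I would exploit the concavity of $\psi$ in $x_n$ and $t$: from $\psi(x',x_n,t)\leq\sup_{\omega}|x'-\delta a'|^2-x_n^2-t^2$, after fixing small $\eps$ and $\nu$, enlarging $T$ and $L$ drives $\psi$ below $-\nu^2$ on the end regions $|t|\in[T-2\eps,T]$ and $|x_n|\in[L-2\eps,L]$; since $\gamma_0\mu\geq0$ this gives $\gamma\psi\leq-\gamma\nu^2\leq\gamma_0\mu-\gamma\nu^2$, whence $\varphi=e^{\gamma\psi}\leq d_\ell\,e^{-\gamma\nu^2}=\tilde{d}_\ell$. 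These final choices of $L>\ell$, $T$, $\eps$ and $\nu$ are fully compatible with the pseudoconvexity constraints fixed in the previous step, precisely because that positivity did not depend on the size of $L$ and $T$.
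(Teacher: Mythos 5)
Your overall scheme for \eqref{c16} (conjugation $w=e^{s\varphi}v$, splitting into symmetric and antisymmetric parts, pointwise positivity of the resulting quadratic form) is a legitimate alternative to the paper's route, which instead verifies pseudoconvexity of $\psi$ and invokes \cite[Theorem 3.2.1']{I}; that difference alone would be fine. The genuine flaw is your claim that the positivity is ``uniform in the time and axial variables'' and therefore ``frees $L$ and $T$ to be chosen as large as needed.'' For a hyperbolic operator and the weight \eqref{c3} this is false. First, any argument based on pointwise positivity of the conjugated form (yours included: the $s^3\gamma^4$-leading coefficient is proportional to $A_2(x,\nabla_{x,t}\psi)^2$, and large $\gamma$ buys nothing where this vanishes) needs the weight to be non-characteristic on all of $\overline{Q}_L$, i.e.
$$
A_2(x,\nabla_{x,t}\psi)=4\left(c(x)\left(|x'-\delta a'|^2+x_n^2\right)-t^2\right)>0,
$$
which is the paper's \eqref{c10} and, given only the lower bound $c\geq c_m$, forces $T<c_m^{1/2}\inf_{x'\in\overline{\omega}}|x'-\delta a'|$ (the paper's \eqref{c4}). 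Second, the bracket positivity itself degrades as $L$ and $T$ grow: besides the cross-sectional term $\nabla_{x'}c\cdot(x'-\delta a')$ you isolated, $\nabla c\cdot\nabla\psi$ also contains $-x_n\partial_n c$ (of size $L\,c_M$), and the terms arising on the characteristic set carry a factor $T|\nabla c|/c^{1/2}$; this is exactly why the paper must impose \eqref{c2}, in which both $L$ and $T$ appear multiplied by $c_M$. So once $\delta$ and $\gamma$ are fixed, $L$ and $T$ are confined to a window depending on $\delta$; they are not free.

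This is not a cosmetic issue: it breaks your proof of \eqref{d6}--\eqref{d7}. You take $d_\ell=e^{\gamma_0\mu}$ with $\mu$ of order one, and then need $\psi\leq-\nu^2$ on the end regions, which at $x_n=0$ requires $T-2\eps\geq\left(\sup_{x'\in\omega}|x'-\delta a'|^2+\nu^2\right)^{1/2}>\sup_{x'\in\omega}|x'-\delta a'|$. Combined with the non-characteristic constraint this yields the chain
$$
T-2\eps>\sup_{x'\in\omega}|x'-\delta a'|\geq\inf_{x'\in\overline{\omega}}|x'-\delta a'|>c_m^{1/2}\inf_{x'\in\overline{\omega}}|x'-\delta a'|>T,
$$
a contradiction (recall $c_m\in(0,1)$ and $\eps>0$). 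The paper escapes this by making $d_\ell$ \emph{large}: it sets $d_\ell=e^{\gamma\beta_\ell}$ with $\beta_\ell=\inf_{x'\in\omega}|x'-\delta a'|^2-\ell^2$, so that \eqref{d6}--\eqref{d7} only require $T^2\geq\sup_{x'\in\omega}|x'-\delta a'|^2-\inf_{x'\in\omega}|x'-\delta a'|^2+\ell^2+\nu^2=g_\ell(\delta)^2+\nu^2$, a quantity of order $\delta$ rather than $\delta^2$. The first part of the paper's proof is devoted precisely to showing that the window $g_\ell(\delta)<T=L<c_m^{1/2}\inf_{x'\in\overline{\omega}}|x'-\delta a'|$ is nonempty for $\delta\geq\delta_0$, because $g_\ell(\delta)=O(\delta^{1/2})$ while the right endpoint grows linearly in $\delta$. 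This balancing act --- the weight inequalities pushing $T$ and $L$ up, the hyperbolic pseudoconvexity pushing them down --- is the quantitative heart of the proposition, and it is entirely absent from your argument.
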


\begin{proof}
The proof is divided in three parts and essentially boils down to \cite[theorem 3.2.1']{I}.\\

\noindent {\it First part: Definition of $\delta_0$, $L$ and $T$.}
Bearing in mind that $| x' - \delta a' |^2 - | y' - \delta a' |^2 = | x' | - |y'|^2 - 2 \delta a' \cdot (x' - y')$ for all $x', y' \in \omega$, we see that
$\sup_{x' \in \omega} |x' - \delta a' |^2 - \inf_{x' \in \omega} |x' - \delta a' |^2 \leq | \omega | ( | \omega | + 4 \delta |a'|)$ for every $\delta >0$, where
$|\omega|:=\sup_{x'\in \overline{\omega}} |x'|$. Hence the function
\bel{d1}
g_\ell(\delta) = \left( \sup_{x' \in \omega} |x' - \delta a' |^2 - \inf_{x' \in \omega} |x' - \delta a' |^2 + \ell^2 \right)^{1 \slash 2}
\ee
scales at most like $\delta^{1 \slash 2}$, proving that there exists $\delta_0>0$ so large that
\bel{d2}
\delta \mathfrak{a}_0 >  \left( \left( 1 + \frac{2}{{c_m}^{1 \slash 2}} \right) g_\ell(\delta) + | \omega |+ 2 \right) c_M + 2,\ \delta \geq \delta_0.
\ee
Further, since $\omega$ is bounded and $a' \neq 0_{\R^{n-1}}$ by \eqref{c1}, we may as well assume upon possibly enlarging $\delta_0$, that we have in addition
$c_m^{1 \slash 2} \inf_{x' \in \overline{\omega}} | x' - \delta a' | > g_\ell(\delta)$ for all $\delta \geq \delta_0$.
This and \eqref{d2} yield that there exists $\vartheta>0$ so small that the two following inequalities
\bel{c2}
\delta \mathfrak{a}_0 - \left( L+ | \omega | + 2 \left( 1 + \frac{T}{{c_m}^{1 \slash 2}} \right) \right) c_M - 2 >0,
\ee
and
\bel{c4} 
c_m^{1 \slash 2} \inf_{x' \in \overline{\omega}} | x' - \delta a' | > T,
\ee
hold simultaneously for every $L$ and $T$ in $(g_\ell(\delta), g_\ell(\delta)+\vartheta)$, uniformly in $\delta \geq \delta_0$.\\

\noindent {\it Second part: Proof of \eqref{c16}.}
We first introduce the following notations, we shall use in the remaining part of the proof. For notational simplicity we put $\bx:=(x,t)$ for $(x,t) \in Q_L$ and $\nabla_{\bx}=(\pd_1,\ldots,\pd_{n},\pd_{n+1})^T$. We also write $\xi'=(\xi_1,\ldots,\xi_{n-1})^T \in \R^{n-1}$, $\xi=(\xi_1,\ldots,\xi_n)^T \in \R^n$ and $\txi=(\xi_1,\ldots,\xi_n,\xi_{n+1})^T \in \R^{n+1}$. We call $A_2$ the principal part of the operator $A$, that is
$A_2=A_2(\bx,\pd)=\pd_t^2- c(x) \Delta$, and denote its symbol by $A_2(x,\txi)=c(x) | \xi |^2 - \xi_{n+1}^2$, where $| \xi |= \left( \sum_{j=1}^n \xi_j^2 \right)^{1 \slash 2}$.
Since $A_2(x,\nabla_{\bx} \psi(\bx)) = 4 \left( c(x) ( | x' - \delta a' |^2 + x_n^2 ) - x_{n+1}^2 \right)$ for every $\bx \in \overline{Q}_L$, we have
\bel{c10}
A_2(x,\nabla_{\bx} \psi(\bx))>0,\ \bx \in \overline{Q}_L,
\ee 
by \eqref{c1} and \eqref{c4}.
For all $\bx \in \overline{Q}_L$ and $\txi \in \R^{n+1}$, put
\bel{c11}
J(\bx,\txi) = J =\sum_{j,k=1}^{n+1} \frac{\pd A_2}{\pd \xi_j} \frac{\pd A_2}{\pd \xi_k} \pd_j \pd_k \psi + 
\sum_{j,k=1}^{n+1} \left( \left( \pd_k \frac{\pd A_2}{\pd \xi_j} \right) \frac{\pd A_2}{\pd \xi_k} - (\pd_k A_2) \frac{\pd^2 A_2}{\pd \xi_j \pd \xi_k}
\right) \pd_j \psi,
\ee
where, for the sake of shortness, we write $\partial_j$, $j \in \N_{n+1}^*$, instead of $\pd \slash \pd x_j$, and $x_{n+1}$ stands for $t$.
Assuming that
\bel{c12}
A_2(x,\txi)=c(x) | \xi |^2 - \xi_{n+1}^2 = 0,\ x \in \overline{\Omega},\ \txi \in \R^{n+1} \backslash \{ 0 \},
\ee
and
\bel{c13}
\nabla_{\txi} A_2(x,\txi) \cdot \nabla_{\bx} \psi(\bx) = 4 \left[ c(x) ( \xi' \cdot (x' - \delta a') - \xi_n x_n ) + \xi_{n+1} x_{n+1} \right]=0,\ 
\bx \in \overline{Q}_L,\ \txi \in \R^{n+1} \backslash \{ 0 \},
\ee
we shall prove that $J(\bx,\txi)>0$ for any $(\bx,\txi) \in \overline{Q}_L \times \R^{n+1}$. To this end we notice that the first sum in the rhs of \eqref{c11} reads
$\langle \mbox{Hess}(\psi) \nabla_{\txi} A_2 , \nabla_{\txi} A_2 \rangle = 8 \left( c^2 ( | \xi' |^2 - \xi_n^2 ) - \xi_{n+1}^2 \right)$, 
and that
\beas
& & \sum_{j,k=1}^{n+1} \left( \left( \pd_k \frac{\pd A_2}{\pd \xi_j} \right) \frac{\pd A_2}{\pd \xi_k} - (\pd_k A) \frac{\pd^2 A_2}{\pd \xi_j \pd \xi_k}
\right) \pd_j \psi = \sum_{j,k=1}^{n} \left( \left( \pd_k \frac{\pd A_2}{\pd \xi_j} \right) \frac{\pd A_2}{\pd \xi_k} - (\pd_k A_2) \frac{\pd^2 A_2}{\pd \xi_j \pd \xi_k}
\right) \pd_j \psi \\
& = &  2 c\sum_{j,k=1}^n \left( \left( 2 \xi_j \xi_k  - | \xi|^2 \frac{\pd \xi_j}{\pd \xi_k} \right) \pd_k c \right)  \pd_j \psi = 2 \left( 2 (\nabla c \cdot \xi) (\nabla \psi \cdot \xi)  -  (\nabla c \cdot \nabla \psi) | \xi |^2 \right),
\eeas
since $\left(\pd_k \frac{\pd A_2}{\pd \xi_j} \right) \frac{\pd A_2}{\pd \xi_k} - (\pd_k A_2) \frac{\pd^2 A_2}{\pd \xi_j \pd \xi_k}=0$ if either $j$ or $k$ is equal to $n+1$.
Therefore we have
\beas
J & = & 2 \left[ 4 \left( c^2 ( | \xi' |^2 - \xi_n^2 ) - \xi_{n+1}^2 \right) + 2 c (\nabla c \cdot \xi) (\nabla \psi \cdot \xi)  - c (\nabla c \cdot \nabla \psi) | \xi |^2  \right] \\
& = & 4 \left[ 2 c^2 (|\xi'|^2 - \xi_{n}^2) - \left( 2 + (x'-\delta a') \cdot \nabla_{x'} c  - x_n \pd_n c \right) \xi_{n+1}^2 - 2 x_{n+1} \xi_{n+1} \nabla c \cdot \xi \right],
\eeas
from \eqref{c12}-\eqref{c13}. Further, in view of \eqref{c12} we have $c^2 ( | \xi'|^2 - \xi_n^2 ) \geq - c ^2 | \xi |^2 \geq -c \xi_{n+1}^2$ and
$| \nabla c \cdot \xi | \leq | \nabla c | | \xi |  \leq ( | \nabla c | \slash c^{1 \slash 2} )| \xi_{n+1} |$, whence
\bel{c15}
J \geq 4 \left[ \delta  a' \cdot \nabla_{x'} c -  \left( x \cdot \nabla c + 2 c + 2 T \frac{| \nabla c |}{c^{1 \slash 2}} + 2 \right) \right] \xi_{n+1}^2.
\ee
Here we used that fact that $x_{n+1}=t \in [0,T]$. Due to \eqref{c0b}-\eqref{c1}, the rhs of \eqref{c15} is lower bounded, up to the multiplicative constant $4 \xi_{n+1}^2$, by the lhs of \eqref{c2}. Since $\xi_{n+1}$ is non zero by \eqref{c0b} and \eqref{c12}, then we obtain $J(\bx,\txi)>0$ for all $(\bx,\txi) \in \overline{Q}_L \times \R^{n+1}$. With reference to
\eqref{c10}, we may apply \cite[Theorem 3.2.1']{I}, getting two constants $s_0=s_0(\gamma)>0$ and $C>0$ such that \eqref{c16}
holds for any $s \geq s_0$ and $v \in H^2(Q_L)$.\\

\noindent {\it Third part: Proof of \eqref{d5}--\eqref{d7}.}
First, \eqref{d5} follows readily from \eqref{c3}, with
$d_\ell:={\rm e}^{\gamma \beta_\ell}$ and $\beta_\ell:=\inf_{x' \in \omega} \left|x'-\delta a' \right|^2 - \ell^2$. Next,  for $\nu \in (0,\vartheta)$ arbitrarily fixed, we put 
\bel{condition}
L=T=g_\ell(\delta)+ \nu.
\ee
Notice for further reference from \eqref{d1}, \eqref{c4} and \eqref{condition}, that we have
\bel{d10}
\beta_\ell \geq \frac{g_\ell(\delta)^2}{c_m} - \ell^2 \geq \left( \frac{1-c_m}{c_m} \right) \ell^2 >0,
\ee
since $c_m \in (0,1)$, by assumption.
Similarly, as $T^2 \geq g_\ell(\delta)^2 + \nu^2=\sup_{x' \in \omega} \left| x' - \delta a' \right|^2 - ( \beta_{\ell}^2 -  \nu^2)$, we dedude from \eqref{c3} that
$$
 \varphi(x', x_n, \pm T)\leq {\rm e}^{\gamma \left(\sup_{x'\in\omega} \left|x'-\delta a' \right|^2-x_n^2-T^2 \right)}
\leq {\rm e}^{\gamma (\beta_\ell-\nu^2)}{\rm e}^{-\gamma x_n^2},\ (x', x_n) \in \overline{\omega}\times [-L, L].
$$
With reference to \eqref{d10} we may thus chose $\eps \in (0, (L-\ell)/2)$ so small that
$$
\varphi(x', x_n, t) \leq d_\ell {\rm e}^{-\gamma  \nu ^2} {\rm e}^{-\gamma x_n^2},\
 (x', x_n) \in \overline{\omega} \times [-L, L],\ |t|\in [T-2 \eps, T],
$$
which entails \eqref{d6}. Finally, since $t$ and $x_n$ play symmetric roles in \eqref{c3}, and since $T=L$,
we obtain \eqref{d7} by substituting $(T,t)$ for $(L,x_n)$ in \eqref{d6}.
\end{proof}

\subsection{A Carleman estimate for the system \eqref{S1}}
 \label{sec-SCE}
In this subsection we derive from Proposition \ref{pr-ic} a global Carleman estimate for the solution to the boundary value problem
\bel{c18}
\left\{  \begin{array}{ll} \pd_t^2 u - \nabla \cdot c \nabla u = f & \mbox{in}\ Q \\ u= 0 & \mbox{on}\ \Sigma,
\end{array} \right.
\ee
where $f \in L^2(Q)$. 
To this purpose we introduce a cut-off function $\chi \in C^2(\R;[0,1])$, such that
\bel{c21}
\chi(x_n):= \left\{ \begin{array}{cl} 1 & \mbox{if}\ |x_n| < L - 2 \eps, \\ 0 & \mbox{if}\ |x_n| \geq L - \eps, \end{array} \right.
\ee
where $\eps$ is the same as in Proposition \ref{pr-ic}, and we set
$$u_\chi(x,t):=\chi(x) u(x,t)\ \mbox{and}\ f_\chi(x,t):=\chi(x) f(x,t),\ (x,t) \in Q. $$
 
\begin{follow}
\label{cor-ec}
 
Let $f \in L^2(Q)$. Then, under the conditions of Proposition \ref{pr-ic},
there exist two constants $s_*>0$ and $C>0$, depending only on $\omega$, $\ell$, $M_0$, $\eta_0$, $a'$, $\mathfrak{a}_0$, $c_m$ and $c_M$, such that the estimate
$$
s \sum_{j=0,1} s^{2(1-j)} \| {\rm e}^{s \varphi} \nabla_{x,t}^j u \|_{0,Q_L}^2 
\leq C \left(  \| {\rm e}^{s \varphi} f \|_{0,Q_L}^2 + s^3 {\rm e}^{2 s \tilde{d}_\ell}  \| u \|_{1,Q_L}^2  + s \sum_{j=0,1} s^{2(1-j)} \| {\rm e}^{s \varphi} \nabla_{x,t}^j u_\chi \|_{0,\Sigma_L}^2 \right),
$$
holds for any solution $u$ to \eqref{c18}, uniformly in $s \geq s_*$.
\end{follow}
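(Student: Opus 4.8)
The plan is to deduce the stated inequality from the global Carleman estimate \eqref{c16} of Proposition \ref{pr-ic}, which as stated applies to functions lying in the space $\mathcal{X}_{L,T}$ defined in \eqref{X_0^2}. The solution $u$ to \eqref{c18} does not belong to this space, since it satisfies neither $\partial_{x_n}^j u(\cdot,\pm L,\cdot)=0$ at the artificial lateral ends $x_n=\pm L$ nor $\partial_t^j u(\cdot,\pm T)=0$ at the time ends, for $j=0,1$. To remedy this I would multiply $u$ by a double cut-off supported away from these ends. Since $L=T$ by \eqref{condition}, the very function $\chi$ of \eqref{c21} may be reused in the time variable, and I would set $w:=\chi(x_n)\chi(t)u=\chi(t)u_\chi$. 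Because $\chi$ vanishes identically together with its first derivative near $\pm(L-\eps)$, one checks that $w\in\mathcal{X}_{L,T}$, so that \eqref{c16} applies to $v=w$.

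The first gain from $w\in\mathcal{X}_{L,T}$ is that the boundary term in \eqref{c16}, a priori carried by the whole of $\partial Q_L$, collapses onto $\Sigma_L$: on the faces $\omega\times\{\pm L\}\times(-T,T)$ and $\Omega_L\times\{\pm T\}$ both $w$ and $\nabla_{x,t}w$ vanish. Moreover, since $u=0$ on $\Sigma_L$ by \eqref{c18}, only the normal derivative survives there and $\nabla_{x,t}w=\chi(t)\nabla_{x,t}u_\chi$, whence $\|{\rm e}^{s\varphi}\nabla_{x,t}^jw\|_{0,\Sigma_L}\le\|{\rm e}^{s\varphi}\nabla_{x,t}^ju_\chi\|_{0,\Sigma_L}$ as $|\chi|\le1$. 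This reproduces exactly the lateral term appearing in the statement.

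Next I would expand $Aw$ with $A=\partial_t^2-\nabla\cdot c\nabla$. A direct computation gives $Aw=\chi(x_n)\chi(t)f+\mathcal{C}$, where the commutator $\mathcal{C}$ gathers all terms in which at least one derivative falls on $\chi$; each is first order in $u$ (typically $\chi'(t)\partial_t u$, $\chi'(x_n)\partial_n u$, or $(c\chi''+\chi'\partial_n c)u$), carries a coefficient bounded through \eqref{c0b} by $c_M$, and is supported in the collar $\{L-2\eps\le|x_n|\le L-\eps\}\cup\{T-2\eps\le|t|\le T-\eps\}$. On this collar the weight obeys $\varphi\le\tilde{d}_\ell$ by \eqref{d6}--\eqref{d7}, so $\|{\rm e}^{s\varphi}\mathcal{C}\|_{0,Q_L}^2\le C{\rm e}^{2s\tilde{d}_\ell}\|u\|_{1,Q_L}^2$, while $\|{\rm e}^{s\varphi}\chi(x_n)\chi(t)f\|_{0,Q_L}^2\le\|{\rm e}^{s\varphi}f\|_{0,Q_L}^2$. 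The same collar argument lets me pass from the left-hand side for $w$ back to that for $u$: since $w=u$ on the inner box where $\chi(x_n)\chi(t)=1$ and $\varphi\le\tilde{d}_\ell$ on its complement, one gets $s\sum_j s^{2(1-j)}\|{\rm e}^{s\varphi}\nabla_{x,t}^ju\|_{0,Q_L}^2\le s\sum_j s^{2(1-j)}\|{\rm e}^{s\varphi}\nabla_{x,t}^jw\|_{0,Q_L}^2+Cs^3{\rm e}^{2s\tilde{d}_\ell}\|u\|_{1,Q_L}^2$ for $s\ge1$. Feeding these three bounds into \eqref{c16} applied to $w$ then yields the claimed inequality, with $s_*$ taken at least equal to $\max(1,s_0)$.

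The genuine difficulty is confined to the time ends $t=\pm T$. Cutting off only in $x_n$, i.e. working with $u_\chi$ itself together with the version of \eqref{c16} valid for all $v\in H^2(Q_L)$, would leave a trace term $\|{\rm e}^{s\varphi}\nabla_{x,t}u_\chi\|_{0,\Omega_L\times\{\pm T\}}^2$ on $\partial Q_L$; its first-order part cannot be controlled by $\|u\|_{1,Q_L}$ but only by $\|u\|_{2,Q_L}$, which would spoil the statement. Introducing the temporal cut-off $\chi(t)$ is precisely what converts this stubborn boundary trace into an interior commutator that is one order lower and, crucially, localized where ${\rm e}^{2s\varphi}\le{\rm e}^{2s\tilde{d}_\ell}$. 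Verifying that every mismatch and commutator term is indeed supported in the collar region governed by \eqref{d6}--\eqref{d7} is thus the crux on which the whole reduction rests.
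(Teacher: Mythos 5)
Your proposal is correct and follows essentially the same route as the paper: the paper likewise forms $v=\eta(t)u_\chi$ with a time cut-off $\eta$ of exactly the shape of your $\chi(t)$, applies Proposition \ref{pr-ic} to $v$, collapses the boundary term onto $\Sigma_L$ using $u=0$ on $\Sigma$, bounds the commutator terms via \eqref{d6}--\eqref{d7} by ${\rm e}^{2s\tilde{d}_\ell}\|u\|_{1,Q_L}^2$, and then passes back from the cut-off function to $u$ on the left-hand side (the paper does this last reduction in two stages, $v\to u_\chi\to u$, where you do it in one, a purely cosmetic difference). Your closing remark on why the temporal cut-off is indispensable --- the trace at $t=\pm T$ would otherwise require $\|u\|_{2,Q_L}$ --- is also accurate.
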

\begin{proof}
Since $u$ is solution to \eqref{c18} we have
$$
\pd_t^2 u_{\chi} - \nabla \cdot c \nabla u_\chi = f_\chi + R_1 u\ \mbox{in}\ Q,
$$
where 
\bel{c22b}
R_1=R_1(x,\partial):=[\chi, \nabla \cdot c \nabla] = - (c \Delta \chi+ \nabla c \cdot \nabla \chi + 2 c \nabla \chi \cdot \nabla),
\ee
is a first-order differential operator. Therefore, the function $v(x,t):=\eta(t) u_\chi(x,t)$, where $\eta \in C^2(\R;[0,1])$ is such that
$$ \eta(t):= \left\{ \begin{array}{cl} 1 & \mbox{if}\ |t| < T - 2 \eps, \\ 0 & \mbox{if}\ |t| \geq T - \eps, \end{array} \right. $$
satisfies
$$
\pd_t^2 v -  \nabla \cdot c \nabla v = g := \eta f_\chi + \eta R_1 u + \eta'' u_\chi + 2 \eta' \pd_t u_\chi\ \mbox{in}\ Q. 
$$
Moreover, as $v(\cdot,\pm L,\cdot)=\pd_{x_n} v(\cdot,\pm L,\cdot)=0$ in $\omega \times (-T,T)$ and $v(\cdot,\pm T)=\pd_t v(\cdot,\pm T)=0$ in $\Omega_L$, we may apply Proposition \ref{pr-ic}, getting
\bel{c24}
s \sum_{j=0,1} s^{2(1-j)}  \| {\rm e}^{s \varphi} \nabla_{x,t}^j v \|_{0,Q_L}^2 
\leq C \left( \| {\rm e}^{s \varphi} g \|_{0,Q_L}^2 + s \sum_{j=0,1} s^{2(1-j)} \| {\rm e}^{s \varphi} \nabla_{x,t}^j v \|_{0,\pd Q_L}^2 \right).
\ee
Further, bearing in mind that $\pd Q_L= \Sigma_L \cup \left( \omega \times \{ \pm L \} \times (-T,T) \right) \cup \left( \Omega_L \times \{ \pm T \} \right)$, we deduce from
the vanishing of $v(\cdot,\pm L,\cdot)$ and $\nabla_{x,t} v(\cdot,\pm L,\cdot)$ in $\omega \times (-T,T)$, and the one of $v(\cdot,\pm T)$ and $\nabla_{x,t} v(\cdot,\pm T)$ in $\Omega_L$, that
\bel{c25}
\| {\rm e}^{s \varphi} \nabla_{x,t}^j v \|_{0,\pd Q_L}= \|  {\rm e}^{s \varphi} \nabla_{x,t}^j v \|_{0,\Sigma_L},\ j=0,1.
\ee
Next we know from \eqref{d7} and \eqref{c22b} that
\bel{c26}
\| {\rm e}^{s \varphi} \eta R_1 u \|_{0,Q_L} \leq C {\rm e}^{s \tilde{d}_\ell} \| u \|_{L^2(-T,T;H^1(\Omega_L))},
\ee
and from \eqref{d6} that
\bel{c26bis}
\| {\rm e}^{s \varphi} (\eta'' u_\chi + 2 \eta' \pd_t u_\chi) \|_{0,Q_L} \leq C {\rm e}^{s \tilde{d}_\ell} \| u_\chi \|_{H^1(-T,T;L^2(\Omega_L))}.
\ee
Hence, putting \eqref{c24}--\eqref{c26bis} together, we find out that
\bel{c24bis}
s \sum_{j=0,1} s^{2(1-j)} \| {\rm e}^{s \varphi} \nabla_{x,t}^j v \|_{0,Q_L}^2 \leq C \left(\| {\rm e}^{s \varphi} f \|_{0,Q_L}^2 +  s \sum_{j=0,1} s^{2(1-j)} \| {\rm e}^{s \varphi} \nabla_{x,t}^j u_\chi \|_{0,\Sigma_L}^2 + {\rm e}^{2s \tilde{d}_\ell} \| u \|_{H^1(Q_L)}^2 \right).
\ee
The next step of the proof involves noticing from \eqref{d6} that $\| {\rm e}^{s \varphi} (1-\eta) \nabla_x^j u_\chi \|_{0,Q_L} \leq {\rm e}^{s \tilde{d}_\ell} \| \nabla_x^j u_\chi \|_{0,Q_L}$ for $j=0,1$, hence
\bea
\| {\rm e}^{s \varphi} \nabla_x^j u_\chi \|_{0,Q_L} & \leq & \| {\rm e}^{s \varphi} (1-\eta) \nabla_x^j u_\chi \|_{0,Q_L} + \| {\rm e}^{s \varphi} \nabla_x^j v \|_{0,Q_L} \nonumber \\
& \leq & {\rm e}^{s \tilde{d}_\ell} \| \nabla_x^j u_\chi \|_{0,Q_L} + \| {\rm e}^{s \varphi} \nabla_x^j v \|_{0,Q_L},\ j=0,1.
\label{c27}
\eea
Furthermore, by combining the identity
$\pd_t u_\chi=(1-\eta) \pd_t u_\chi + \eta \pd_t u_\chi=(1-\eta) \pd_t u_\chi - \eta' \pd_t u_\chi + \pd_t v$ with \eqref{d6}, we get that
\beas
\| {\rm e}^{s \varphi} \pd_t u_\chi \|_{0,Q_L} & \leq & \| {\rm e}^{s \varphi} (1-\eta)  \pd_t u_\chi \|_{0,Q_L} + \| {\rm e}^{s \varphi} \dot{\eta} u_\chi \|_{0,Q_L} + \| {\rm e}^{s \varphi} \pd_t v \|_{0,Q_L} \nonumber \\
& \leq & {\rm e}^{s \tilde{d}_\ell} \left( \| \pd_t u_\chi \|_{0,Q_L} + \| \eta' \|_{L^{\infty}(-T,T)}  \|  u_\chi  \|_{0,Q_L} \right)+ \| {\rm e}^{s \varphi} \pd_t v \|_{0,Q_L},
\eeas
which, together with \eqref{c27}, yields
\bel{c29}
\sum_{j=0,1} s^{2(1-j)} \| {\rm e}^{s \varphi} \nabla_{x,t}^j u_\chi \|_{0,Q_L}^2 \leq C \sum_{j=0,1} s^{2(1-j)} \left( {\rm e}^{2s \tilde{d}_\ell}  \| \nabla_{x,t}^j u_\chi \|_{0,Q_L}^2
+ \| {\rm e}^{s \varphi} \nabla_{x,t}^j v \|_{0,Q_L}^2 \right).
\ee
Similarly, using \eqref{d7}, we derive from the identity $\pd_t^j u = \pd_t^j u_\chi + (1-\chi) \pd_t^j u$ for $j=0,1$, that
$$
\| {\rm e}^{s \varphi} \pd_t^j u \|_{0,Q_L} \leq \| {\rm e}^{s \varphi} \pd_t^j u_\chi \|_{0,Q_L} + {\rm e}^{s \tilde{d}_\ell} \| \pd_t^j u \|_{0,Q_L},\ j=0,1,
$$
and from $\nabla_x u = \nabla_x u_\chi + (1-\chi) \nabla_x u - \chi' (0, \ldots, 0, u)^T$, that
$$
\| {\rm e}^{s \varphi} \nabla_x u \|_{0,Q_L} \leq \| {\rm e}^{s \varphi} \nabla_x u_\chi \|_{0,Q_L} + {\rm e}^{s \tilde{d}_\ell} \left( \| \nabla_x u \|_{0,Q_L} + \| \chi' \|_{L^{\infty}(-L,L)} \| u \|_{0,Q_L} \right).
$$
As a consequence we have
\bel{c30}
\sum_{j=0,1} s^{2(1-j)} \| {\rm e}^{s \varphi} \nabla_{x,t}^j u \|_{0,Q_L}^2 
\leq C \sum_{j=0,1} s^{2(1-j)} \left( {\rm e}^{2s \tilde{d}_\ell} \| \nabla_{x,t}^j u  \|_{0,Q_L}^2 + \| {\rm e}^{s \varphi} \nabla_{x,t}^j u_\chi \|_{0,Q_L}^2 \right).
\ee
Finally we obtain the desired result by gathering \eqref{c24bis} and \eqref{c29}-\eqref{c30}.
\end{proof}

\section{Inverse problem}
\label{sec-inverse}
\setcounter{equation}{0}

\subsection{Linearized inverse problem and preliminary estimate}
\label{sec-lip}

In this subsection we introduce the linearized inverse problem associated with \eqref{S1} and relate the first Sobolev norm of the conductivity to some suitable initial condition of this boundary problem.

Namely, given $c_i \in \Lambda_{\Gamma}$ for $i=1,2$, we note $u_i$ the solution to \eqref{S1} where $c_i$ is substituted for $c$, suitably extended to $(-T,0)$ in accordance with \eqref{t-sym}. Thus, putting
\bel{s0}
c:=c_1-c_2\ \mbox{and}\ f_c:=\nabla \cdot ( c \nabla u_2 ),
\ee
it is clear from \eqref{S1} that the function $u:=u_1-u_2$ is solution to the linearized system
\bel{s1}
\left\{  \begin{array}{ll} \pd_t^2 u - \nabla \cdot ( c_1 \nabla u )  =  f_c & \mbox{in}\ Q \\ u  =  0 & \mbox{on}\ \Sigma \\
u(\cdot,0) = \pd_t u(\cdot,0)  =  0 & \mbox{in}\ \Omega.
\end{array} \right.
\ee
By differentiating $k$-times \eqref{s1} with respect to $t$, for $k \in \N^*$ fixed, we see that $u^{(k)}:=\pd_t^k u$ is solution to
\bel{s2}
\left\{  \begin{array}{ll} \pd_t^2 u^{(k)} - \nabla \cdot ( c_1 \nabla u^{(k)} )  =  f_c^{(k)} & \mbox{in}\ Q \\ u^{(k)}  = 0 & \mbox{on}\ \Sigma, \\
\end{array} \right.
\ee
with $f_c^{(k)}:=\pd_t^k f_c=\nabla \cdot ( c \nabla u_2^{(k)} )$, where $u_2^{(k)}$ stands for $\pd_t^k u_2$. 

We stick with the notations of Corollary \ref{cor-ec}. In particular, for any function $v$, we denote $\chi v$ by $v_{\chi}$, where $\chi$ is defined in \eqref{c21}.
Upon multiplying both sides of the identity \eqref{s2} by $\chi$, we obtain that 
\bel{s4}
\left\{  \begin{array}{ll} \pd_t^2 u_{\chi}^{(k)} - \nabla \cdot ( c_1 \nabla u_{\chi}^{(k)} ) =  f_{c_\chi}^{(k)}- g_k & \mbox{in}\ Q \\ u_\chi^{(k)} =  0 & \mbox{on}\ \Sigma, \\
\end{array} \right.
\ee
with
\bel{s4b}
f_{c_\chi} := \nabla \cdot (c_\chi \nabla u_2)\ \mbox{and}\
g_k:= \nabla \cdot (c_1 (\nabla \chi) u^{(k)}) + c_1 \nabla \chi \cdot \nabla u^{(k)} + c  \nabla \chi \cdot \nabla u_2^{(k)}.
\ee
Notice that $g_k$ is supported in 
$\tilde{\Omega}_{\eps}:= \{ x=(x',x_n),\ x' \in \omega\ \mbox{and}\ \abs{x_n} \in (L-2 \eps, L-\eps) \}$.

Having said that we may now upper bound, up to suitable additive and multiplicative constants, the $e^{s \varphi(\cdot,0)}$-weighted first Sobolev norm of the conductivity $c_\chi$ in $\Omega_L$, by the corresponding norm of the initial condition $u_\chi^{(2)}(\cdot,0)$.

\begin{lemma}
\label{lm1}
Let $u$ be the solution to the linearized problem \eqref{s1} and let $\chi$ be defined by \eqref{c21}. Then there exist two constants $s_*>0$ 
and $C>0$, depending only on $\omega$, $\varepsilon$ and the constant $M_0$ defined by \eqref{S100}, such that the estimate
$$ \sum_{j=0,1} \| e^{s \varphi(\cdot,0)} \nabla^j c_{\chi} \|_{0,\Omega_L}^2 \leq 
C s^{-1} \left( \sum_{j=0,1} \| e^{s \varphi(\cdot,0)} \nabla^j u_{\chi}^{(2)}(\cdot,0) \|_{0,\Omega_L}^2  + e^{2 s \tilde{d}_\ell} \right),
$$
holds for all $s \geq s_*$.
\end{lemma}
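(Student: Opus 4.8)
The plan is to reduce the lemma to a weighted energy estimate for a first-order transport equation satisfied by $c_\chi$ at the initial time, the transport direction being $\nabla \theta_0$ and the positivity being supplied by \eqref{S10}. First I would evaluate the linearized system \eqref{s1} at $t=0$. Since $u(\cdot,0)=\pd_t u(\cdot,0)=0$ we have $\nabla u(\cdot,0)=0$, so the equation reduces to $u^{(2)}(\cdot,0)=\pd_t^2 u(\cdot,0)=\nabla \cdot (c \nabla \theta_0)$, where I used $u_2(\cdot,0)=\theta_0$. Multiplying by $\chi$ and commuting $\chi$ through the divergence then gives
\[
\nabla \cdot (c_\chi \nabla \theta_0)=u_\chi^{(2)}(\cdot,0)+c\, \chi' \pd_n \theta_0=:G,
\]
the correction term being supported in $\{L-2\eps \le |x_n| \le L-\eps\}$, hence in the region where $\varphi(\cdot,0)\le \tilde{d}_\ell$ by \eqref{d7}.

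Two structural facts drive the argument. Because $c_1=c_2=c_*$ on $\mathcal{O}_* \cap \Omega$ by \eqref{1.7}, the difference $c$ together with $c_\chi$ and $\nabla c_\chi$ vanishes on $\mathcal{O}_* \cap \Omega$; by \eqref{ic0} all three are therefore supported in $\omega_* \times [-(L-\eps),L-\eps]$, precisely where the monotonicity \eqref{S10} of $\theta_0$ is available. Next, writing $\nabla \varphi(\cdot,0) \cdot \nabla \theta_0 = 2 \gamma \varphi\left[ x' \cdot \nabla_{x'} \theta_0 - \delta\, a' \cdot \nabla_{x'} \theta_0 - x_n \pd_n \theta_0 \right]$, the contribution $-\delta\, a' \cdot \nabla_{x'} \theta_0 \ge \delta \eta_0$ on $\omega_*$ by \eqref{S10}, whereas the remaining terms are bounded by $C M_0$ on $\overline{\Omega_L}$. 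Consequently, after possibly enlarging $\delta_0$, there is a constant $\alpha>0$ with $\nabla \varphi(\cdot,0) \cdot \nabla \theta_0 \ge \alpha$ on the support of $c_\chi$.

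With $\rho:=e^{s\varphi(\cdot,0)}$ I would multiply the identity $\nabla \theta_0 \cdot \nabla c_\chi + (\Delta \theta_0) c_\chi = G$ by $\rho^2 c_\chi$ and integrate over $\Omega_L$. Integrating the transport term by parts and using $\nabla(\rho^2)=2s\rho^2 \nabla \varphi(\cdot,0)$ produces the leading term $s \int_{\Omega_L} \rho^2 c_\chi^2\, \nabla \varphi(\cdot,0) \cdot \nabla \theta_0$; the boundary integral vanishes since $c_\chi=0$ near $\Gamma_L$ (where $c=0$) and $\chi \equiv 0$ near $x_n=\pm L$. The positivity above yields $s \alpha \int \rho^2 c_\chi^2 \le C M_0 \int \rho^2 c_\chi^2 + \int \rho^2 |c_\chi|\,|G|$, and absorbing both right-hand terms by Young's inequality for $s$ large gives $\int \rho^2 c_\chi^2 \le C s^{-2} \int \rho^2 G^2$. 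Finally $\int \rho^2 G^2 \le 2 \int \rho^2 (u_\chi^{(2)}(\cdot,0))^2 + C e^{2 s \tilde{d}_\ell}$, the last term coming from the support property of the correction in $G$ and \eqref{d7}.

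For the gradient I would differentiate the transport equation: $w_i:=\pd_i c_\chi$ solves $\nabla \theta_0 \cdot \nabla w_i + (\Delta \theta_0) w_i = \pd_i G - (\pd_i \nabla \theta_0)\cdot \nabla c_\chi - (\pd_i \Delta \theta_0) c_\chi$, which is where the hypothesis $\theta_0 \in W^{3,\infty}(\Omega)$ is used. Running the same weighted energy estimate for each $w_i$ and summing over $i$ leads to $\tfrac{s\alpha}{2}\int \rho^2 |\nabla c_\chi|^2 \le C s^{-1}\left( \int \rho^2 |\nabla G|^2 + \int \rho^2 |\nabla c_\chi|^2 + \int \rho^2 c_\chi^2 \right)$. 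The one delicate point, and the main obstacle, is that the commutator terms reintroduce $\nabla c_\chi$ on the right; this is resolved by noting that they appear with a factor $s^{-1}$ relative to the leading term, so for $s$ large they are absorbed into the left-hand side. Inserting the already established bound on $\int \rho^2 c_\chi^2$ and the estimate $\int \rho^2 |\nabla G|^2 \le C(\int \rho^2 |\nabla u_\chi^{(2)}(\cdot,0)|^2 + e^{2 s \tilde{d}_\ell})$ then closes the gradient estimate. Adding the two bounds and using $s^{-2}\le s^{-1}$ for $s\ge 1$ yields the stated inequality.
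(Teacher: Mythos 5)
Your proof is correct, and its skeleton coincides with the paper's: both evaluate the linearized equation at $t=0$ to obtain the transport identity $\nabla \cdot ( c_{\chi} \nabla \theta_0 ) = u_{\chi}^{(2)}(\cdot,0) + c\, \nabla \chi \cdot \nabla \theta_0$ (the paper's \eqref{s7}), both exploit the fact that $c_\chi$ and $\nabla c_\chi$ are supported in $\omega_* \times [-(L-\eps),L-\eps]$ --- so that \eqref{S10} is available there and boundary contributions vanish --- both enlarge $\delta$ so that the weight is non-characteristic for the transport field $\nabla\theta_0$, and both get the gradient bound by differentiating the identity and absorbing the commutator terms $\nabla c_\chi \cdot \nabla \pd_i \theta_0 + c_\chi \Delta \pd_i \theta_0$ into the $s$-weighted left-hand side for $s$ large. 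The one genuine divergence is the engine behind the weighted first-order estimate. The paper constructs an auxiliary bounded $C^2$ domain $\Omega_*$ with $\omega_* \times (-(L-\eps),L-\eps) \subset \Omega_* \subset \omega_* \times (-L,L)$, observes that $\pd_i^j c_\chi \in H_0^1(\Omega_*)$, and invokes Proposition 2.2 of \cite{IY03} as a black box under the non-vanishing condition $| \nabla \theta_0 \cdot (x'-\delta a', -x_n) | \geq \mu_0$, which gives $s \| e^{s\varphi(\cdot,0)} v \|_{0,\Omega_*}^2 \leq C \| e^{s\varphi(\cdot,0)} \nabla \cdot ( v \nabla \theta_0 ) \|_{0,\Omega_*}^2$ for $v = \pd_i^j c_\chi$, $j=0,1$. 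You instead prove the needed estimate from scratch with the multiplier $e^{2s\varphi(\cdot,0)} c_\chi$ and integration by parts, using the signed positivity $\nabla \varphi(\cdot,0) \cdot \nabla \theta_0 \geq \alpha > 0$ on the support of $c_\chi$ (which, as you note, follows from \eqref{S10} and the same enlargement of $\delta$, since $\nabla\varphi(\cdot,0)$ is a positive multiple of $(x'-\delta a',-x_n)$). What your route buys is self-containedness --- no auxiliary smooth domain and no external citation --- and in fact a slightly stronger bound, $s^{-2}$ in place of the lemma's $s^{-1}$; what the paper's route buys is brevity, and a quoted hypothesis requiring only non-vanishing rather than a fixed sign of $\nabla\theta_0 \cdot (x'-\delta a',-x_n)$, though under \eqref{S10} the sign is available anyway, so nothing is lost in your version.
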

\begin{proof}
Let $\Omega_*$ be an open subset of $\R^n$ with $C^2$ boundary, such that
\bel{s5}
\omega_* \times (-(L-\eps) , L - \eps) \subset \Omega_* \subset \omega_* \times (-L , L),
\ee
where $\eps$ is defined by Proposition \ref{pr-ic}.
We notice from \eqref{c21} and \eqref{ic0} that $\pd_i^j c_{\chi} \in H_0^1(\Omega_*)$ for all $i \in \N_n^*$ and $j=0,1$. 

Further, with reference to \eqref{S10} we may assume upon possibly enlarging $\delta \in [\delta_0,+\infty)$, where $\delta_0$ is the same as in Proposition \ref{pr-ic}, that we have
$$
| \nabla \theta_0 \cdot (x_1 - \delta a_1 , \ldots , x_{n-1} - \delta a_{n-1} , - x_n ) | \geq \mu_0 >0,\ x \in \Omega_*.
$$
Thus applying \cite[Proposition  2.2]{IY03} \footnote{Let $D$ be a bounded open subset of $\R^n$, $n \geq 1$, with $C^2$ boundary, and consider the first-order operator 
$P(x,\pd) := \sum_{i=1}^n p_i(x) \pd_i + p_0(x)$, 
where $p_0 \in C^0(\overline{D})$ and $p:=(p_1,\ldots,p_n) \in C^1(\overline{D})^n$. Assume that
$$
| p(x) \cdot (x_1-\delta a_1, \ldots, x_{n-1}-\delta a_{n-1}, -x_n) | \geq \mathfrak{p}_m,\ x \in \overline{D},
$$
for some $\mathfrak{p}_m>0$. Then for any $\mathfrak{p}_M \geq \max \{\| p_0 \|_{C^0(\overline{D})}, \| p_i \|_{C^1(\overline{D})},\ i \in \N_n^* \}$, there exist $s_*>0$ and $C>0$, depending only on $\mathfrak{p}_M$, such that the estimate 
$$ \| e^{s \varphi(\cdot,0)} v \|_{0,D}^2 \leq C s^{-1}\| e^{s \varphi(\cdot,0)} P v \|_{0,D}^2, $$
holds for all $s \geq s_*$ and $v \in H_0^1(D)$.}
with $D=\Omega_*$, $P(x,\pd) v=\nabla \cdot ( (\nabla \theta_0) v )$ and $v=\pd_i^j c_{\chi} \in H_0^1(\Omega_*)$ since $\chi(x_n)=0$ for $x_n \geq L- \eps$, for $i \in \N_n^*$ and $j=0,1$, we obtain that
\bel{s5b}
s \| e^{s \varphi(\cdot,0)} \pd_i^j c_{\chi} \|_{0,\Omega_*}^2 \leq C \| e^{s \varphi(\cdot,0)} \nabla \cdot ( ( \pd_i^j  c_{\chi} ) \nabla \theta_0 ) \|_{0,\Omega_*}^2,\ i \in \N_n^*.\ j=0,1.
\ee
Since $c_{\chi}(x',x_n)=0$ a.e. in $\omega_* \times \left( (-L,-(L-\eps)) \cup (L- \eps, L) \right)$ by \eqref{c21}, we have \\
$\| e^{s \varphi(\cdot,0)} \pd_i^j c_{\chi} \|_{0,\Omega_*}=\| e^{s \varphi(\cdot,0)} \pd_i^j c_{\chi} \|_{0,\Omega_L}$ for each $i \in \N_n^*$ and $j=0,1$, from \eqref{s5}. We derive from this and \eqref{s5b} that
\bel{s6}
s \| e^{s \varphi(\cdot,0)} \pd_i^j c_{\chi} \|_{0,\Omega_L}^2 \leq C \| e^{s \varphi(\cdot,0)} \nabla \cdot ( ( \pd_i^j  c_{\chi} ) \nabla \theta_0 ) \|_{0,\Omega_L}^2,\ i \in \N_n^*,\ j=0,1.
\ee
Further, taking $t=0$ in the first line of \eqref{s4} with $j=0$, we get that
\bel{s7}
\nabla \cdot ( c_{\chi} \nabla \theta_0 )=u_{\chi}^{(2)}(\cdot,0)+ c \nabla \chi \cdot \nabla \theta_0.
\ee
From this, \eqref{s6} and \eqref{d7} then follows that
\bea
s \| e^{s \varphi(\cdot,0)} c_{\chi} \|_{0,\Omega_L}^2 & \leq & C \left( \| e^{s \varphi(\cdot,0)} u_{\chi}^{(2)}(\cdot,0) \|_{0,\Omega_L}^2 + \| e^{s \varphi(\cdot,0)} c \nabla \chi  \cdot \nabla \theta_0 \|_{0,\Omega_L}^2 \right) \nonumber \\
& \leq & C \left( \| e^{s \varphi(\cdot,0)} u_{\chi}^{(2)}(\cdot,0) \|_{0,\Omega_L}^2 + e^{2 s \tilde{d}_{\ell}} \right). \label{s8}
\eea
Similarly, since $\nabla \cdot ( (\pd_i c_\chi) \nabla \theta_0 ) = \pd_i \nabla \cdot ( c_\chi \theta_0 ) - \nabla \cdot (c_\chi \nabla \pd_i \theta_0 )$ for every $i \in \N_n^*$, we derive from \eqref{s7} that
$$
\nabla \cdot ( (\pd_i c_\chi ) \nabla \theta_0 ) = \pd_i u_\chi^{(2)}(\cdot,0) + \pd_i ( c \nabla \theta_0 \cdot \nabla \chi ) - \nabla c_\chi \cdot \nabla \pd_i \theta_0- c_\chi \Delta \pd_i \theta_0.
$$
As a consequence we have, 
\beas
& & \| e^{s \varphi(\cdot,0)} \nabla \cdot ( (\pd_i c_\chi) \nabla \theta_0 ) \|_{0,\Omega_L}^2 \\
& \leq & C \left( \| e^{s \varphi(\cdot,0)} \pd_i u_\chi^{(2)}(\cdot,0) \|_{0,\Omega_l}^2 
+ \sum_{j=0,1} \| e^{s \varphi(\cdot,0)} \nabla^j c_\chi \|_{0,\Omega_l}^2
+ e^{2 s \tilde{d}_\ell} \right),\  i \in \N_n^*,
\eeas
according to \eqref{d7}. Summing up the above estimate over $i$ in $\N_n^*$, it follows from
\eqref{s6} that
$$
s \| e^{s \varphi(\cdot,0)} \nabla c_{\chi} \|_{0,\Omega_L}^2 \leq C \left( \| e^{s \varphi(\cdot,0)} \nabla u_\chi^{(2)}(\cdot,0) \|_{0,\Omega_L}^2 +
\sum_{j=0,1} \| e^{s \varphi(\cdot,0)} \nabla^j c_\chi \|_{0,\Omega_L}^2 
+ e^{2 s \tilde{d}_\ell} \right).
$$
This and \eqref{s8} yield the desired result.
\end{proof}

\subsection{Completion of the proof}
The proof is divided into three steps.

\noindent {\it Step 1.} The first step of the proof is to upper bound $u_\chi^{(2)}(\cdot,0)$ in the $e^{s \varphi(\cdot,0)}$-weighted $H^1(\Omega_L)$-norm topology, by the corresponding norms of $u_\chi^{(2)}$ and $u_\chi^{(3)}$ in $Q_L$, with the aid of the following technical result, borrowed from \cite{BCS12}[Lemma 3.2].
\begin{lemma}
\label{lm2}
There exists a constant $s_*>0$ depending only on $T$  such that we have
$$ \| z(\cdot,0) \|_{0,\Omega_L}^2 \leq 2 \left( s  \| z \|_{0,Q_L}^2 + s^{-1} \| \pd_t z \|_{0,Q_L}^2 \right), $$
for all $s \geq s_*$ and $z \in H^1(-T,T;L^2(\Omega_L))$.
\end{lemma}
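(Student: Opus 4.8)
The plan is to establish the weighted trace inequality
$$
\| z(\cdot,0) \|_{0,\Omega_L}^2 \leq 2 \left( s \| z \|_{0,Q_L}^2 + s^{-1} \| \pd_t z \|_{0,Q_L}^2 \right)
$$
by the standard device of integrating the pointwise identity for the $t$-derivative of $\abs{z}^2$ against a suitable weight, then choosing the weight so as to recover the value at $t=0$ while keeping the boundary contributions at $t=\pm T$ under control. First I would fix $x \in \Omega_L$ and work with the function $t \mapsto \abs{z(x,t)}^2$, which lies in $W^{1,1}(-T,T)$ since $z(x,\cdot) \in H^1(-T,T)$ for a.e.\ $x$ by Fubini. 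The cleanest implementation multiplies $\pd_t \abs{z}^2 = 2 \Pre{\overline{z} \, \pd_t z}$ by a weight $\rho(t)$ that equals $+1$ at $t=-T$ and $-1$ at $t=+T$ (for instance an affine function, or $-\tanh$-type profile), so that integration by parts turns the left endpoint into $+\abs{z(x,-T)}^2$, the right endpoint into $+\abs{z(x,T)}^2$, and produces $+2\abs{z(x,0)}^2$ from the jump of $\rho'$; taking instead a smooth $\rho$ with $\rho(0^-)-\rho(0^+)$ engineered to extract the $t=0$ value is the key structural trick.

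The key steps, in order, are as follows. I would first record the one-dimensional identity
$$
\abs{z(x,0)}^2 = -\int_0^{T} \pd_t \abs{z(x,t)}^2 \, dt + \abs{z(x,T)}^2
= \int_{-T}^{0} \pd_t \abs{z(x,t)}^2 \, dt + \abs{z(x,-T)}^2,
$$
valid for a.e.\ $x$. Averaging the two representations and inserting the scaling weight $e^{\mp s t}$ (or a linear cutoff that vanishes at the far endpoint) is what generates the factors $s$ and $s^{-1}$: the term $\overline{z}\,\pd_t z$ is split by Young's inequality as $2\abs{\Pre{\overline{z}\,\pd_t z}} \le s\abs{z}^2 + s^{-1}\abs{\pd_t z}^2$, which produces exactly the two summands on the right-hand side. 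Choosing $s_*$ large enough (depending only on $T$) guarantees that the weight dominates the endpoint terms $\abs{z(x,\pm T)}^2$, i.e.\ the boundary contributions are absorbed rather than left over. I would then integrate the resulting pointwise bound over $x \in \Omega_L$ and apply Fubini once more to pass to the space-time norms $\| z \|_{0,Q_L}$ and $\| \pd_t z \|_{0,Q_L}$.

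The main obstacle is the bookkeeping that ensures the constant is exactly $2$ and that the endpoint terms at $t=\pm T$ are genuinely controlled rather than appearing as uncontrolled traces: one must choose the weight profile so that its derivative has the correct sign throughout $(-T,T)$ and so that the boundary values it multiplies are nonnegative and can be discarded. This is a purely one-variable calculus estimate with a parameter $s$, so no Carleman machinery is needed; the only subtlety is selecting the weight and the threshold $s_*(T)$ so that the inequality closes cleanly with the stated constant. Since the statement cites \cite{BCS12}[Lemma 3.2], I would note that the argument reduces to that reference, and the estimate is then applied with $z = u_\chi^{(2)}$ (and its $\pd_t$-shift) to feed Step 1 of the completion of the proof of Theorem \ref{T.1}.
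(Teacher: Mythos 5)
Be aware, first, that the paper contains no proof of this lemma at all: it is imported verbatim from \cite[Lemma 3.2]{BCS12} (``borrowed from''), so your argument must stand on its own. Your overall plan --- a one-variable calculus identity in $t$, a weight, Young's inequality $2\abs{\Pre{\overline{z}\,\pd_t z}} \leq s\abs{z}^2 + s^{-1}\abs{\pd_t z}^2$, then integration in $x$ and Fubini --- is indeed all that is needed, and you are right that no Carleman machinery is involved. But your featured ``cleanest implementation'' fails on two concrete points. First, the weights you propose (an affine profile, or a $-\tanh$-type profile, equal to $+1$ at $t=-T$ and $-1$ at $t=T$) are continuous, so their distributional derivative contains no Dirac mass at $t=0$, and integrating $\pd_t\abs{z}^2$ against them produces no term $\abs{z(x,0)}^2$ whatsoever: to extract the trace at $t=0$ it is $\rho$ itself, not $\rho'$, that must jump there. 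Second, if you do take a jump weight with those endpoint values, say $\rho(t)=-\mathrm{sign}(t)$, integration by parts gives
$$
2\abs{z(x,0)}^2 = \int_{-T}^T \rho\, \pd_t \abs{z}^2\, dt + \abs{z(x,-T)}^2 + \abs{z(x,T)}^2,
$$
so the endpoint traces enter the right-hand side with a plus sign and an $s$-independent coefficient. Your claim that ``choosing $s_*$ large enough \dots\ the weight dominates the endpoint terms'' names no actual mechanism: these terms carry no factor decaying in $s$, and nothing on the left can absorb them. They could only be handled by running a further trace estimate at $t=\pm T$, which you never do.

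The repair is precisely the alternative you mention only in passing: a weight that vanishes at the far endpoint. For instance, on $(0,T)$ take $\rho(t)=(T-t)\slash T$, so that $\rho(T)=0$, $\rho(0)=1$, $\rho'=-1\slash T$, and for a.e. $x$,
$$
\abs{z(x,0)}^2 = -\int_0^T \pd_t\!\left( \rho \abs{z}^2 \right) dt
= \frac{1}{T}\int_0^T \abs{z}^2\, dt - \int_0^T \rho\, 2\Pre{\overline{z}\,\pd_t z}\, dt
\leq \left( \frac{1}{T} + s \right) \int_0^T \abs{z}^2\, dt + s^{-1} \int_0^T \abs{\pd_t z}^2\, dt,
$$
using $0 \leq \rho \leq 1$ and Young. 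For $s \geq s_* := 1\slash T$ the right-hand side is at most $2s \int_{-T}^T \abs{z}^2 dt + s^{-1} \int_{-T}^T \abs{\pd_t z}^2 dt$, and integrating over $x \in \Omega_L$ gives the lemma with the stated constant $2$. (Equivalently, averaging the identity $\abs{z(x,0)}^2 = \abs{z(x,t)}^2 - \int_0^t \pd_\tau \abs{z}^2\, d\tau$ over $t \in (-T,T)$ yields the same conclusion with $s_* = 1\slash (2T)$.) So the lemma is as elementary as you say and your parenthetical route is the correct one, but the main line of your proposal --- the $\pm 1$-valued jump weight together with absorption of the endpoint traces by taking $s$ large --- does not work as written.
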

Namely, we apply Lemma \ref{lm2} with $z = e^{s \varphi} \pd_i^j u_\chi^{(2)}$ for $i \in \N_n^*$ and $j=0,1$, getting 
$$
\| e^{s \varphi(\cdot,0)} \pd_i^j u_\chi^{(2)}(\cdot,0) \|_{0,\Omega_L}^2 \leq 2 \left( s  \| e^{s \varphi} \pd_i^j u_\chi^{(2)} \|_{0,Q_L}^2 + s^{-1} \| e^{s \varphi} \pd_i^j u_{\chi}^{(3)} \|_{0,Q_L}^2 \right),\ s \geq s_*.
$$
Summing up the above estimate over $i \in \N_n^*$ and $j=0,1$, we obtain for all $s \geq s_*$ that
\bel{s10}
\sum_{j=0,1} \| e^{s \varphi(\cdot,0)} \nabla^j u_\chi^{(2)}(\cdot,0) \|_{0,\Omega_L}^2
\leq 2 \sum_{j=0,1} \left( s \| e^{s \varphi} \nabla^j u_\chi^{(2)} \|_{0,Q_L}^2 + s^{-1} \| e^{s \varphi} \nabla^j u_\chi^{(3)} \|_{0,Q_L}^2 \right).
\ee

\noindent {\it Step 2.} The next step involves majorizing the right hand side of \eqref{s10} with
\bel{hs}
\mathfrak{h}_k(s) := \sum_{j=0,1} s^{2(1-j)} \| {\rm e}^{s \varphi} \nabla_{x,t}^j u_\chi^{(k)} \|_{0,\Sigma_L}^2,\ k=2,3.
\ee
Indeed, since $u_{\chi}^{(k)}$, for $k=2,3$,  is solution to \eqref{c18} with $c=c_1$ and $f=f_{c_\chi} - g_k$, according to \eqref{s4}, then Corollary \ref{cor-ec} yields
$$
s \sum_{j=0,1} s^{2(1-j)} \| {\rm e}^{s \varphi} \nabla_{x,t}^j u_\chi^{(k)} \|_{0,Q_L}^2  \leq C \left( \| {\rm e}^{s \varphi} f_{c_\chi}^{(k)} \|_{0,Q_L}^2 + \| {\rm e}^{s \varphi} g_k \|_{0,\tilde{Q}_{\eps}}^2+  s^3{\rm e}^{2 s \tilde{d}_\ell} \| u_\chi^{(k)} \|_{1,Q_L}^2
+ s \mathfrak{h}_k(s) \right),
$$
for $s$ large enough.
In light of \eqref{s10} this entails that
\bea 
& & \sum_{j=0,1} \| e^{s \varphi(\cdot,0)} \nabla^j u_\chi^{(2)}(\cdot,0) \|_{0,\Omega_L}^2 \nonumber \\
& \leq &
C \sum_{k=2,3} \left( \| {\rm e}^{s \varphi} f_{c_\chi}^{(k)} \|_{0,Q_L}^2 + \| {\rm e}^{s \varphi} g_k \|_{0,\tilde{Q}_{\eps}}^2 + s^3 {\rm e}^{2 s \tilde{d}_\ell} \| u_\chi^{(k)} \|_{1,Q_L}^2 + s\mathfrak{h}_k(s) \right). \label{s13}
\eea
Further, recalling \eqref{s4b}, we see from \eqref{z2} and \eqref{S100} (resp., from  \eqref{z2}, \eqref{c0b}, \eqref{S100} and \eqref{d7}) that the first (resp., second) term of the sum in the right hand side of 
\eqref{s13} is upper bounded up to some multiplicative constant, by $\sum_{j=0,1} \| {\rm e}^{s \varphi} \nabla^j c_\chi \|_{0,Q_L}^2$ (resp., ${\rm e}^{2 s \tilde{d}_\ell} ( \| u^{(k)} \|_{1,Q_L}^2 + 1)$). 
From this and Lemma \ref{lm1} then follows for $s$ sufficiently large that
\bea
& &  s \sum_{j=0,1} \| e^{s \varphi(\cdot,0)} \nabla^j c_\chi \|_{0,\Omega_L}^2  \nonumber \\
& \leq & C \left( \sum_{j=0,1} \| {\rm e}^{s \varphi} \nabla^j c_\chi \|_{0,Q_L}^2 + {\rm e}^{2 s \tilde{d}_\ell} +\sum_{k=2,3} \left( s^3 {\rm e}^{2 s \tilde{d}_\ell} \| u^{(k)} \|_{1,Q_L}^2 + s \mathfrak{h}_k(s) \right) \right). \label{3.18}
\eea
\noindent {\it Step 3.}
Finally, we notice from \eqref{c3} that
\bel{3.18b}
\| {\rm e}^{s \varphi} \nabla^j c_\chi \|_{0,Q_L} = \| \rho_s^{1 \slash 2} {\rm e}^{s \varphi(\cdot,0)} \nabla^j c_\chi \|_{0,\Omega_L},\ j=0,1,
\ee
where $\rho_s(x):=\int_{-T}^T {\rm e}^{2 s (\varphi(x,t) - \varphi(x,0))} dt = \int_{-T}^T {\rm e}^{-2 s \varphi(x,0) (1 - \exp(-\gamma t^2))} dt$ for all $x \in \Omega_L$. Bearing in mind that $\varphi(x,0) \geq \kappa:={\rm e}^{\gamma(\inf_{x'\in\omega} \left|x'-\delta a'\right|^2-L^2)} >0$ for all $x \in \Omega_L$, we get that
$$ \| \rho_s \|_{L^{\infty}(\Omega_L)} \leq \int_{-T}^T {\rm e}^{-2 s \kappa (1 - \exp(-\gamma t^2))} dt,\ s >0. $$
Therefore we have $\lim_{s \rightarrow + \infty} \rho_s = 0$, uniformly in $\Omega_L$, by the dominated convergence theorem, so we derive from
\eqref{3.18}-\eqref{3.18b} that
\bel{3.18d}
s \sum_{j=0,1} \| e^{s \varphi(\cdot,0)} \nabla^j c_\chi \|_{0,\Omega_L}^2 \leq C \left( {\rm e}^{2 s \tilde{d}_\ell} + \sum_{k=2,3} \left( s^3 {\rm e}^{2 s \tilde{d}_\ell} \| u^{(k)} \|_{1,Q_L}^2 + s \mathfrak{h}_k(s) \right) \right),
\ee
upon taking $s$ sufficiently large. With reference to \eqref{d5}--\eqref{d7}, this entails that
\bel{3.37b}
\sum_{j=0,1}  \| \nabla^j c_\chi \|_{0,\Omega_L}^2
\leq
C \sum_{k=2,3}  \left( s^2 {\rm e}^{-2s (d_\ell-\tilde{d}_\ell)} + \mathfrak{h}_k(s) \right).
\ee
Here we used \eqref{z2}-\eqref{S100} and the embedding $\Omega_{\ell} \subseteq \Omega_{L}$ in order to substitute $\Omega_{\ell}$ for $\Omega_{L}$ in the left hand side of \eqref{3.18d}.
Now, taking into account that $\tilde{d}_\ell< d_\ell$, we end up getting the desired result from \eqref{3.37b}.

\bigskip


\begin{thebibliography}{99}

\bibitem{Be5}
 \newblock M. Bellassoued, 
 \newblock \emph{Global logarithmic stability in inverse hyperbolic
problem by arbitrary boundary observation}, Inverse Problems,  {\bf 20}  (2004),
1033-1052.

\bibitem{Be6}
 \newblock M. Bellassoued, 
 \newblock \emph{Uniqueness and stability in determining the speed of propagation of second-order hyperbolic equation with variable coefficients},  Applicable Analysis,  {\bf 83} (2004), 983-1014.
 
 
%

 \bibitem{BCS12}
 \newblock M. Bellassoued, M. Cristofol, and E. Soccorsi,
 \newblock \emph{Inverse boundary value problem for the dynamical heterogeneous
   Maxwell's system}, Inverse Problems {\bf 28} (2012), 095009.
 
 %
\bibitem{BCY09}  
\newblock M. Bellassoued, M. Choulli and M. Yamamoto,
 \newblock \emph{Stability estimate for an inverse wave equation and a multidimensional Borg-Levinson theorem}, J. Diff. Equat.,
{\bf 247}, 2 (2009), 465-494.

\bibitem{BJY08}
 \newblock M. Bellassoued, D. Jellali and M. Yamamoto,
 \newblock \emph{Lipschitz stability in in an inverse problem for a hyperbolic equation with a finite set of boundary data}, Applicable Analysis {\bf 87} (2008), 1105-1119.

%
\bibitem{BY06}
 \newblock M. Bellassoued and M. Yamamoto, \newblock \emph{Logarithmic stability in determination of a coefficient in an acoustic
equation by arbitrary boundary observation}, J. Math. Pures Appl. {\bf 85} (2006), 193-224.

\bibitem{BeYa08}
\newblock M. Bellassoued and M. Yamamoto, \newblock \emph{Determination of a coefficient in the wave
equation with a single measurement}, Applicable Analysis {\bf 87} (2008), 901-920.

\bibitem{BeYa3}
\newblock M. Bellassoued and M. Yamamoto, \newblock \emph{Carleman estimates with second large parameter
for second order hyperbolic operators in
a Riemannian manifolds}, Applicable Analysis (2011).

%
%


\bibitem{BK}
\newblock A. L. Bukhgeim and M. V. Klibanov, \newblock \emph{Global uniqueness of class of multidimentional inverse problems}, Soviet
Math. Dokl. {\bf 24} (1981), 244-247.

\bibitem{CCG}
\newblock L. Cardoulis, M. Cristofol and L. Cardoulis, \newblock \emph{Inverse problem for the Schr\"odinger operator in an unbounded strip}, J. Inv. Ill-Posed Problems {\bf 15} (2007), 1-20.

\bibitem{CKS1} \newblock M. Choulli, Y. Kian and E. Soccorsi, 
\newblock\emph{Determining the scalar potential in a periodic quantum waveguide from the DN map}, 
New Prospects in Direct, Inverse and Control Problems for Evolution Equations, INdAM Series {\bf 10}, chap. 5, Springer, 2014. 

\bibitem{CKS2} \newblock M. Choulli, Y. Kian and E. Soccorsi, 
\newblock\emph{Stable determination of time-dependent scalar potential from boundary measurements in a periodic quantum waveguide}, arXiv:1306.6601.

\bibitem{CS} \newblock M. Choulli and E. Soccorsi, 
\newblock\emph{Recovering the twisting function in a twisted waveguide from the DN map}, to appear in Jour. of Spectral Theory.



%
%

\bibitem{DL5}
\newblock R. Dautray and J.-L. Lions, \newblock \emph{Mathematical analysis and Numerical Methods for Science and Technology}, Vol. 5, Evolution problems I, Springer Verlag, Berlin Heidelberg, 1992.



\bibitem{Ev} \newblock L. C. Evans, \newblock \emph{Partial Differential Equations}, Amer. Math. Soc., Graduate Studies in Mathematics, vol. 19.

%



\bibitem{Ima2}
\newblock O. Imanuvilov, \newblock \emph{On Carleman estimates for hyperbolic equations}, Asymptotic Analysis {\bf 32}  (2002), 185--220.



\bibitem{IY2}
\newblock O. Imanuvilov and M. Yamamoto, \newblock \emph{Global Lipschitz
stability in an inverse hyperbolic problem by
interior observations}, Inverse Problems {\bf 17} (2001), 717-728.

\bibitem{IY03}
\newblock  O. Imanuvilov and M. Yamamoto, \newblock \emph{Determination of a coefficient in an acoustic
equation with single measurement}, Inverse Problems {\bf 19} (2003), 157-171.






\bibitem{IS} 
\newblock V. Isakov and Z. Sun, \newblock \emph{Stability estimates for hyperbolic inverse problems with local boundary data}, Inverse Problems {\bf 8}
(1992), 193-206.

\bibitem{I}
\newblock  V. Isakov, \newblock \emph{Inverse Problems for Partial Differential Equations},
 Springer-Verlag, Berlin, 2006.



\bibitem{KK}
\newblock M. A. Kazemi and M. V. Klibanov, \newblock\emph{Stability estimates for ill-posed Cauchy
problems involving hyperbolic equations and inequality}, Applicable Analysis {\bf 50} (1993), 93-102.

%
%
 
\bibitem{K} \newblock Y. Kian, \newblock \emph{Stability of the determination of a coefficient for the wave equation in an infinite waveguide}, arXiv:1305.4888.

\bibitem{KPS1} \newblock Y. Kian, Q. S. Phan and E. Soccorsi, \newblock \emph{Carleman estimate for infinite cylindrical quantum domains and application to inverse problems}, 
Inverse Problems {\bf30}, 5 (2014), 055016. 

\bibitem{KPS2} \newblock Y. Kian, Q. S. Phan and E. Soccorsi, \newblock \emph{H\"older stable determination of a quantum scalar potential in unbounded cylindrical domains}, arXiv:1311.5323.

\bibitem{KM91}
\newblock M. V. Klibanov and J. Malinsky, \newblock \emph{Newton-Kantorovich method for 3-dimensional potential
inverse scattering problem and stability of the hyperbolic Cauchy problem with time dependent data}, Inverse Problems {\bf 7} (1991), 577--595.


\bibitem{KY06}
\newblock M. V. Klibanov and M. Yamamoto, \newblock \emph{Lipschitz stability of an inverse problem for
an accoustic equation}, Applicable Analysis {\bf 85} (2006), 515-538.

\bibitem{LM1} \newblock J.-L. Lions, E. Magenes, \newblock \emph{Probl\`emes aux limites non homog\`enes et applications}, vol. 1, Dunod (1968).

\bibitem{SU} \newblock P. Stefanov and G. Uhlmann, \newblock \emph{Stability estimates for the hyperbolic Dirichlet to Neumann map in anisotropic media}, 
J. Funct. Anal. {\bf 154}
(1998), 330-358.

\bibitem{Y99}
\newblock M. Yamamoto, \newblock \emph{Uniqueness and stability in multidimensional hyperbolic inverse problems}, J. Math. Pures Appl. {\bf 78} (1999), 65-98.

%
%
%
%
%
%
%


%

%





\end{thebibliography}
\end{document}